\begin{document}

\begin{singlespace}

\title{\textsf{Calculating Radius of Robust Feasibility of Uncertain Linear
Conic Programs via Semidefinite Programs}}


\author{M.A. Goberna  \and  V. Jeyakumar \and G. Li %
}

\institute{M.A. Goberna, Corresponding author \at
University of Alicante, 03080 Alicante, Spain, Email: mgoberna@ua.es
           \and
           V. Jeyakumar    \at
              University of New South Wales, Sydney, NSW 2052, Email: v.Jeyakumar@unsw.edu.au.
              \and
               G. Li   \at
              University of New South Wales, Sydney, NSW 2052, Email: g.li@unsw.edu.au
}

\date{Received: date / Accepted: date}

\maketitle

\begin{abstract}
The radius of robust feasibility provides a numerical value for the largest
possible uncertainty set that guarantees robust feasibility of an uncertain
linear conic program. This determines when the robust feasible set is
non-empty. Otherwise the robust counterpart of an uncertain program is not
well-defined as a robust optimization problem. In this paper, we address a
key fundamental question of robust optimization: How to compute the radius
of robust feasibility of uncertain linear conic programs, including linear
programs? We first provide computable lower and upper bounds for the radius
of robust feasibility for general uncertain linear conic programs under the
commonly used ball uncertainty set. We then provide important classes of
linear conic programs where the bounds are calculated by finding the optimal
values of related semidefinite linear programs, among them uncertain
semidefinite programs, uncertain second-order cone programs and uncertain support vector
machine problems. In the case of an uncertain linear program, the exact
formula allows us to calculate the radius by finding the optimal value of an
associated second-order cone program.
\end{abstract}
\keywords{Linear conic programs \and  Semidefinite programs \and
Parametric optimization \and  Robust feasibility}


\section{Introduction}

Consider the following linear conic programming problem
\begin{equation}
\begin{array}{ll}
(P)\  & \ \displaystyle \min_{x\in \mathbb{R}^{n}}{c}^{T}x \medskip \\
& \text{s.t. }\left[
\begin{array}{c}
a_{1}^{T}x+b_{1} \\
\vdots \\
a_{m}^{T}x+b_{m}%
\end{array}%
\right] \in -K,%
\end{array}%
\end{equation}%
where $\left\{ 0_{m}\right\} \neq K\varsubsetneqq \mathbb{R}^{m}$ is a given
closed pointed convex cone with nonempty interior (which implies that its
positive dual cone $K^{\ast }$ enjoys the same properties), ${c}\in \mathbb{R%
}^{n}$, and the vectors $\left( a_{i},b_{i}\right) \in \mathbb{R}%
^{n+1},1\leq i\leq m,$ or, equivalently, the matrix $A:=\left[ a_{1}\mid
...\mid a_{m}\right] ^{T}\in \mathbb{R}^{m\times n}$ and the vector $%
b:=\left( b_{1},...,b_{m}\right) ^{T}\in \mathbb{R}^{m},$ are the data
associated to the conic programming problem $(P)$. This model problem has
found numerous applications in engineering, statistics and finance (\cite%
{Anjos17}, \cite{Boyd}), and covers many important optimization problems
such as

\begin{description}
\item[\textrm{(SDPs)}] Semi-definite programming (SDP in brief) problems,
where $K=S_{+}^{q}$ is the cone consisting of all $(q\times q)$ positive
semi-definite symmetric matrices.

\item[\textrm{(SOCPs)}] Second order cone programming problems, where
\begin{equation*}
K=\left\{ x\in \mathbb{R}^{m}:x_{m}\geq \left\Vert \left(
x_{1},...,x_{m-1}\right) \right\Vert \right\}
\end{equation*}%
is the so-called second order cone (SOC in short), usually denoted by $%
K_{p}^{m}$.

\item[\textrm{(LPs)}] Linear programming (LP) problems, where $K=\mathbb{R}%
_{+}^{m}$. We must emphasize that most results in this paper are new even in
this particular setting.
\end{description}

In practice, the data associated to the optimization problem $(P)$ are
uncertain due to measurement errors or prediction errors. One of the
prominent ways of dealing with optimization under data uncertainty is the
robust optimization approach. Following this approach, one assumes that the
constraint data $\left( a_{i},b_{i}\right) $ ranges in some \emph{%
uncertainty set}. We note that it is known in the robust optimization
literature \cite{BEL09} that the general case where the linear objective
data $c$ is also uncertain can be easily converted to the current setting by
introducing an auxiliary variable.
\medskip

\noindent(\textbf{Uncertainty sets})
Given $i\in \left\{ 1,...,m\right\} ,$\ the uncertainty set for the
constraint data $\left( a_{i},b_{i}\right) $ is the ball $\mathcal{U}%
_{r_{i}}:=\left( \overline{a}_{i},\overline{b}_{i}\right) +r_{i}\mathbb{B}%
_{n+1},$ where $r_{i}\in \mathbb{R}_{+}$ (with $r_{i}=0$ when $\left(
a_{i},b_{i}\right) $ is deterministic) and $\mathbb{B}_{n+1}$ is the
Euclidean closed unit ball in $\mathbb{R}^{n+1}.$

Denote $\overline{A}=\left[ \overline{a}_{1}\mid ...\mid \overline{a}_{m}%
\right] ^{T}$ and $\overline{b}=\left( \overline{b}_{1},...,\overline{b}%
_{m}\right) ^{T}.$ In practical situations, the matrix $\left[ \overline{A},%
\overline{b}\right] $ is called the nominal data which may correspond to a
central value of a sample of observed matrices $\left[ A,b\right] $. For
instance, $\left( \overline{a}_{i},\overline{b}_{i}\right) $ could be the
mean vector of a sample of $\left( a_{i},b_{i}\right) -$vectors. Denoting $%
r=(r_{1},\ldots ,r_{m})\in \mathbb{R}_{+}^{m},$\ the \emph{robust counterpart%
} of $(P),$ depending on $r,$ can be formulated as%
\begin{equation*}
\begin{array}{ll}
(P_{r}) & \min_{x\in \mathbb{R}^{n}}{c}^{T}x\medskip \\
& \text{s.t. }\left[
\begin{array}{c}
a_{1}^{T}x+b_{1} \\
\vdots \\
a_{m}^{T}x+b_{m}%
\end{array}%
\right] \in -K,\forall (a_{i},b_{i})\in \mathcal{U}_{r_{i}},1\leq i\leq m,%
\end{array}%
\end{equation*}%
which gives the worst-case solution for all possible realization of the
scenarios in the constraint uncertainty set. The feasible set of $(P_{r})$
\begin{equation*}
F_{r}(\overline{A},\overline{b}):=\left\{ x\in \mathbb{R}^{n}:Ax+b\in -K,%
\text{ }\forall \left( a_{i},b_{i}\right) \in \mathcal{U}_{r_{i}},i=1,...,m%
\right\}
\end{equation*}%
is referred as the robust feasible set, where $A:=\left[ a_{1}\mid ...\mid
a_{m}\right] ^{T}\in \mathbb{R}^{m\times n}$ and $b:=\left(
b_{1},...,b_{m}\right) ^{T}\in \mathbb{R}^{m}$.

A great deal of work has been published on the efficient computation of
optimal solutions of the robust counterpart of a variety of uncertain
optimization problems. These problems are inherently semi-infinite, but they
can be reduced to tractable semi-definite programs under suitable
assumptions (see, e.g., \cite{BEL09}, \cite{ben-nem3}, \cite{BN01}, \cite{BN}%
, and references therein).

One of the key questions in robust optimization is to determine when the
robust feasible set is indeed nonempty (otherwise, the robust counterpart
problem is not well-defined). An important concept quantifying the robust
feasibility issue is the so-called \emph{radius of robust feasibility} (RRF
in short) of the nominal data, that can be roughly defined as the largest
size of the uncertainty set so that the robust feasible set is nonempty.
Formulas for the RRF have been given in \cite{CJ16}, \cite{GJLP15} and \cite%
{LST20} for robust LP problems, in \cite{GJLP} for robust linear
semi-infinite programming (LSIP) problems, in \cite{CLLLY20}, \cite{GJLL16}
and \cite{LW18} for robust convex programs, and in \cite{LST20} for robust
mixed-integer LP problems. The uncertainty sets are balls in \cite{GJLP} and
\cite{GJLP15}, spectrahedra (e.g., ellipsoids, polytopes, and boxes) in \cite%
{CJ16}, and more general compact convex sets in \cite{CLLLY20}, \cite{GJLL16}%
, \cite{LW18} and \cite{LST20}. Most of the mentioned works discuss
tractability issues regarding the proposed formulas for the computation of
the RRF. In particular, in \cite{CJ16} and \cite{GJLL16} the RRF of linear
and convex polynomial problems is computed by solving associated
semi-definite programs while \cite{LST20} proposes to compute the RRF of
linear and mixed linear programs via fractional programming and effective
binary search algorithms, respectively. The introduction of the latter paper
briefly reviews applications of the RRF to facility location design \cite%
{CN03}, flexibility index problem \cite{ZGL16}, and design and control of
gas networks \cite{KHPS15}.

The uncertainty sets are also Euclidean balls in \cite{GJLP15}, devoted to
certify the existence of highly robust solutions (i.e., robust feasible
solutions which are optimal for any scenario) in robust multi-objective
linear and convex programming with uncertain objectives through the
estimation of the corresponding radius of highly robust efficiency. The
formulas provided in this paper can also be used in robust scalar (resp.,
multi-objective) linear conic programming with deterministic constraints and
uncertain objective (resp., objectives).

Although any conic LP problem can be reformulated as an LSIP problem through
the \textquotedblleft natural\textquotedblright\ linearization of the conic
linear system $\left\{ \overline{A}x+\overline{b}\in -K\right\} $ as $\sigma
_{K^{\ast }}:=\left\{ a^{T}x\leq -b,\left( a,b\right) \in T_{K^{\ast
}}\right\} ,$ where the index set is $$T_{K^{\ast }}:=\left\{ \
\sum\limits_{i=1}^{m}\lambda _{i}(\overline{a}_{i},\overline{b}_{i}):\lambda
\in K^{\ast }\right\} ,$$ with $K^{\ast }$ denoting the dual cone of $K,$
this approach is seldom useful in examining uncertain conic LP, in
particular from the stability and robustness perspectives, as $\sigma
_{K^{\ast }}$ is not even stable in the sense of LSIP because the latter
system contains the trivial inequality $0_{m}^{T}x\leq 0$ (recall \cite[%
Theorem 6.1]{GL98}). Therefore, none of the approaches used in the above
mentioned papers \cite{CJ16,GJLP15,LST20,GJLP,CLLLY20,GJLL16,ZGL16} can be
directly adapted to deal with problems with conical constraints which calls
for further research on the study of RRF for uncertain conic programming
problems. Moreover, the mathematical formulae for estimating the RRF are
often very difficult to validate numerically. In this paper we make the following contributions to robust linear conic
programming:

\begin{description}
\item[(i)] We first establish computable lower and upper bounds for the radius
of robust feasibility for general uncertain linear conic programs under the
commonly used ball uncertainty set.

\item[(ii)] We then show how the bounds can be calculated for important classes of linear conic programs by finding the optimal values of related
semidefinite linear programs (SDPs), among them uncertain SDPs, uncertain
second-order cone programs and uncertain support vector machine problems. In
the case of an uncertain linear program, the exact formula allows us to
calculate the radius by finding the optimal value of an associated
second-order cone program.
\end{description}


The paper is organized as follows. Section 2 introduces the important
concepts of radius of robust feasibility as well as admissible set of
parameters formed by the parameters $r\in \mathbb{R}_{+}^{m}$ such that the
robust feasible set $F_{r}(\overline{A},\overline{b})$ is nonempty. Section
3 provides lower and upper bounds for the RRF of uncertain conic programs.
Section 4 establishes computational tractable bounds for uncertain
semi-definite programs and second order cone programs. Section 5 provides  conclusions and some future research directions. Finally, the appendix
presents proofs of certain technical results.

\section{Preliminaries and Radius of Robust Feasibility}

Let us start introducing the necessary notation. We denote by $0_{n}$, $%
1_{n},$ $\left\Vert \cdot \right\Vert ,$ $\mathbb{B}_{n},$ and $d$ the
vector of all zeros, the vector of all ones, the Euclidean norm, the
corresponding closed unit ball, and the Euclidean distance in $\mathbb{R}%
^{n} $, respectively. We also denote by $\left\{ e_{1},...,e_{n}\right\} $
the canonical basis and by $\Delta _{n}=\left\{ x\in \mathbb{R}%
_{+}^{n}:1_{n}^{T}x=1\right\} $ the unit simplex in $\mathbb{R}^{n}.$ Given $%
\emptyset \neq X\subseteq \mathbb{R}^{n}$, $\mathop{\rm int}X$, $%
\mathop{\rm
bd}X$, $\overline{X}$, $\mathop{\rm aff}X,$ $\mathop{\rm conv}X,$ denote the
interior, the boundary, the closure, the affine hull, and the convex hull of
$X$, respectively, whereas $\mathop{\rm cone}X:=\mathbb{R}_{+}%
\mathop{\rm
conv}X$ denotes the convex conical hull of $X\cup \{0_{n}\}$. We represent
by $\mathop{\rm dist}\left( \overline{x},X\right) =\inf_{x\in X}d\left(
\overline{x},x\right) $ the distance from $\overline{x}$ to a set $%
X\subseteq \mathbb{R}^{n},$ with $\mathop{\rm dist}(\overline{x},\emptyset
)=+\infty $ by convention. A set $K$ is called a cone if and only if  $\lambda x\in K$
for any $\lambda \geq 0$ and $x\in K$. The (positive) dual cone of a cone $%
K\subseteq \mathbb{R}^{m}$ is defined as $$K^{\ast }:=\{a\in \mathbb{R}%
^{m}:a^{T}x\geq 0\mbox{
for all }x\in K\}.$$

Throughout the paper, we assume that the cone $\left\{ 0_{m}\right\} \neq
K\varsubsetneqq \mathbb{R}^{m}$ is a given closed pointed convex cone with
nonempty interior. Moreover, we also assume that the feasible set of the
nominal problem $P_{0_{m}}$ is nonempty, that is, $\left\{ x\in \mathbb{R}%
^{n}:\overline{A}x+\overline{b}\in -K\right\} \neq \emptyset $, where $%
\overline{A}=\left[ \overline{a}_{1}\mid ...\mid \overline{a}_{m}\right]
^{T} $ and $\overline{b}=\left( \overline{b}_{1},...,\overline{b}_{m}\right)
^{T}. $  We note that these assumptions on the cone $K$ in $\left( P\right) $\ are
standard assumptions in the linear conic programming literature. In
particular, the assumption $\left\{ 0_{m}\right\} \neq K\varsubsetneqq
\mathbb{R}^{m}$ eliminates uninteresting cases (the feasible sets of the
involved problems being either affine manifolds or the whole space); the
condition that $\mathop{\rm int}K\neq \emptyset $ ensures \cite{GRTZ03,Beer93} the
existence of a compact base $\mathcal{B}$ for $K^{\ast }$ (i.e., a compact
and convex subset $\mathcal{B}$ of $K^{\ast }$ such that $0_{m}\notin
\mathcal{B}$ and $K^{\ast }=\mathbb{R}_{+}\mathcal{B}$).

Next, we introduce the important definition of the admissible set of
parameters, which is formed by those parameters $r\in \mathbb{R}_{+}^{m}$ so
that the robust feasible set is nonempty. {This definition plays an
important role in defining the concept of RRF considered later. We also
emphasize that this concept appears to be new and was not examined in the
previous study of the literature of RRF.}

\begin{definition}[\textbf{Admissible set}]
Let $\overline{A}=\left[ \overline{a}_{1}\mid ...\mid \overline{a}_{m}\right]
^{T}\in \mathbb{R}^{m\times n}$ and let $\overline{b}=\left( \overline{b}%
_{1},...,\overline{b}_{m}\right) ^{T}\in \mathbb{R}^{m}.$ The set
\begin{equation*}
C(\overline{A},\overline{b}):=\{r=(r_{1},\ldots ,r_{m})\in \mathbb{R}%
_{+}^{m}:F_{r}(\overline{A},\overline{b})\neq \emptyset \}\subseteq \mathbb{R%
}^{m}
\end{equation*}%
is called the \emph{admissible set of parameters }of the uncertain problem $%
\left( P\right) $, where $F_{r}(\overline{A},\overline{b})$ is the robust
feasible set given by
\begin{equation*}
F_{r}(\overline{A},\overline{b}):=\left\{ x\in \mathbb{R}^{n}:Ax+b\in -K,%
\text{ }\forall \left( a_{i},b_{i}\right) \in \mathcal{U}_{r_{i}},i=1,...,m%
\right\} ,
\end{equation*}%
$A:=\left[ a_{1}\mid ...\mid a_{m}\right] ^{T}\in \mathbb{R}^{m\times n}$, $%
b:=\left( b_{1},...,b_{m}\right) ^{T}\in \mathbb{R}^{m}$ and $\mathcal{U}%
_{r_{i}}$ is the ball uncertainty set defined as $\mathcal{U}%
_{r_{i}}:=\left( \overline{a}_{i},\overline{b}_{i}\right) +r_{i}\mathbb{B}%
_{n+1},$
\end{definition}

From its definition, $C(\overline{A},\overline{b})$ is radiant (in the sense
that $\mu C(\overline{A},\overline{b})\subseteq C(\overline{A},\overline{b})$
for all $\mu \in \left[ 0,1\right] $). However, we observe that

\begin{itemize}
\item The set $C(\overline{A},\overline{b})$ may be reduced to $\left\{
0_{m}\right\}$. For example, $C(\overline{A},\overline{b})=\left\{
0_{2}\right\} $ when $\left[ \overline{A}\mid \overline{b}\right] =\left[
\begin{array}{cc}
1 & 0 \\
-1 & 0%
\end{array}%
\right] ,$ with $K=-\mathbb{R}_{+}^{2}$.

\item The set $C(\overline{A},\overline{b})$ may contain nonzero points
despite being contained in $\mathop{\rm bd}\mathbb{R}_{+}^{m}.$ For
instance, $C(\overline{A},\overline{b})=\mathop{\rm conv}\left\{ \left(
0,0,0\right) ,\left( 0,0,1\right) \right\} $ when $\left[ \overline{A}\mid
\overline{b}\right] =\left[
\begin{array}{cc}
1 & 0 \\
-1 & 0 \\
0 & -1%
\end{array}%
\right] $ and $K=-\mathbb{R}_{+}^{3}$.

\item The set $C(\overline{A},\overline{b})$ can be a non-closed set with
nonempty interior. For example, $C(\overline{A},\overline{b})=\left[
0,1\right) ^{2}$ when $\left[ \overline{A}\mid \overline{b}\right] =\left[
\begin{array}{cc}
1 & 1 \\
1 & 2%
\end{array}%
\right] $ and $K=-\mathbb{R}_{+}^{2}$.
\end{itemize}

Next, we summarize some basic properties of the admissible set of parameters
below. These interesting mathematical properties could be of some
independent interest. Its proof is included in the appendix for the purpose
of self-containment.

\begin{proposition}[\textbf{Basic properties of }$C(\overline{A},\overline{b}%
)$]
\label{Prop_Consist_Set3} Consider the admissible set $C(\overline{A},%
\overline{b})$.

\begin{itemize}
\item[\textrm{(a)}] (Dual characterization)
\begin{equation*}
C(\overline{A},\overline{b})=\left\{ r\in \mathbb{R}_{+}^{m}:\left(
0_{n},1\right) \notin \overline{\mathop{\rm cone}\displaystyle%
\bigcup\limits_{\lambda \in \mathcal{B}}\left\{ \displaystyle%
\sum\limits_{i=1}^{m}\lambda _{i}\left( (\overline{a}_{i},\overline{b}%
_{i})+r_{i}\mathbb{B}_{n+1}\right) \right\} }\right\} .\newline
\end{equation*}
where $\mathcal{B}$ is a compact base of $K^{\ast }$.

\item[\textrm{(b)}] (Boundedness) The set $C(\overline{A},\overline{b})$ is
a bounded set which can be expressed as a union of segments emanating from $%
0_{m}$.

\item[\textrm{(c)}] (Characterization of the interior) Let $\sigma _{r}^{%
\mathcal{B}}$ be the linear system describing $F_{r}(\overline{A},\overline{b%
})$ in (\ref{2.2}). If $r\in \mathbb{R}_{++}^{m}$ and $\sigma _{r}^{\mathcal{%
B}}$ satisfies the Slater condition, then $r\in \mathop{\rm int}C(\overline{A%
},\overline{b}).$ Conversely, if $r\in \mathop{\rm int}C(\overline{A},%
\overline{b})$ and $\sum\limits_{i=1}^{m}r_{i}\lambda _{i}>0$ for all $%
\lambda \in \mathcal{B},$ then $\sigma _{r}^{\mathcal{B}}$ satisfies the
Slater condition.
\end{itemize}
\end{proposition}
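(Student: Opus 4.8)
The plan is to fix a compact base $\mathcal{B}$ of $K^{\ast }$ (which exists since $\mathop{\rm int}K\neq \emptyset$), rewrite $F_{r}(\overline{A},\overline{b})$ as the solution set of a semi-infinite \emph{linear} system by computing the worst case over the balls $\mathcal{U}_{r_{i}}$, and then deduce (a)--(c) from that description. Using $K=K^{\ast \ast }$ and that $\mathcal{B}$ generates $K^{\ast }$, one has $x\in F_{r}(\overline{A},\overline{b})$ iff $\sum_{i=1}^{m}\lambda _{i}(a_{i}^{T}x+b_{i})\le 0$ for every $\lambda \in \mathcal{B}$ and every $(a_{i},b_{i})\in \mathcal{U}_{r_{i}}$; since $\mathbb{B}_{n+1}$ is symmetric, $\sup_{\Vert u_{i}\Vert \le r_{i}}\lambda _{i}u_{i}^{T}(x,1)=|\lambda _{i}|\,r_{i}\,\Vert (x,1)\Vert $, so taking the supremum over the $(a_{i},b_{i})$ yields
$$F_{r}(\overline{A},\overline{b})=\Big\{x\in \mathbb{R}^{n}:\ \textstyle\sum_{i=1}^{m}\lambda _{i}(\overline{a}_{i}^{T}x+\overline{b}_{i})+\Vert (x,1)\Vert \sum_{i=1}^{m}|\lambda _{i}|\,r_{i}\le 0\ \ \forall \lambda \in \mathcal{B}\Big\};$$
equivalently, $x\in F_{r}(\overline{A},\overline{b})$ iff $(x,1)$ lies in the polar cone $D_{r}^{\circ }$ of $D_{r}:=\bigcup_{\lambda \in \mathcal{B}}\big\{\sum_{i=1}^{m}\lambda _{i}((\overline{a}_{i},\overline{b}_{i})+r_{i}\mathbb{B}_{n+1})\big\}$.

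For (a) I would invoke the bipolar theorem. A polar is unchanged under passing to the closed convex conical hull, so $D_{r}^{\circ }=C_{r}^{\circ }$, where $C_{r}:=\overline{\mathop{\rm cone}\,D_{r}}$ is precisely the closed convex cone appearing in the statement. Hence $F_{r}(\overline{A},\overline{b})\neq \emptyset $ iff $C_{r}^{\circ }$ contains a point of the form $(x,1)$, i.e. a point with last coordinate $1$; since $C_{r}^{\circ }$ is a cone this is equivalent to the existence of $z\in C_{r}^{\circ }$ with $(0_{n},1)^{T}z>0$, and by $C_{r}=C_{r}^{\circ \circ }$ the latter says exactly $(0_{n},1)\notin C_{r}$, which is the claimed dual characterisation.

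For (b) I would argue from (a). If $r\in C(\overline{A},\overline{b})$ then $(0_{n},1)\notin C_{r}\supseteq D_{r}$, so for every $\lambda \in \mathcal{B}$ the ball $\sum_{i=1}^{m}\lambda _{i}(\overline{a}_{i},\overline{b}_{i})+\big(\sum_{i=1}^{m}|\lambda _{i}|\,r_{i}\big)\mathbb{B}_{n+1}$ misses $(0_{n},1)$, whence $\sum_{i=1}^{m}|\lambda _{i}|\,r_{i}<1+\big\Vert \sum_{i=1}^{m}\lambda _{i}(\overline{a}_{i},\overline{b}_{i})\big\Vert \le 1+M$, where $M:=\max_{\lambda \in \mathcal{B}}\big\Vert \sum_{i=1}^{m}\lambda _{i}(\overline{a}_{i},\overline{b}_{i})\big\Vert <\infty $ by compactness of $\mathcal{B}$. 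Since $\mathop{\rm int}K^{\ast }\neq \emptyset $, for each $i$ there is $\lambda ^{(i)}\in \mathcal{B}$ with $\lambda _{i}^{(i)}\neq 0$ (otherwise $K^{\ast }$ would lie in the hyperplane $\{z_{i}=0\}$), so $r_{i}\le (1+M)/|\lambda _{i}^{(i)}|$ for every $r\in C(\overline{A},\overline{b})$; this gives boundedness. The ``union of segments'' assertion is the radiant property already recorded before the proposition together with $0_{m}\in C(\overline{A},\overline{b})$ (the nominal problem is assumed feasible): for $r\in C(\overline{A},\overline{b})$ and $\mu \in \lbrack 0,1]$ we get $\mu r\in \mu C(\overline{A},\overline{b})\subseteq C(\overline{A},\overline{b})$, hence $[0_{m},r]\subseteq C(\overline{A},\overline{b})$ and $C(\overline{A},\overline{b})=\bigcup_{r\in C(\overline{A},\overline{b})}[0_{m},r]$.

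For (c) the two ingredients are the monotonicity $r\le r^{\prime }\Rightarrow F_{r^{\prime }}(\overline{A},\overline{b})\subseteq F_{r}(\overline{A},\overline{b})$ (larger uncertainty balls) and the fact that, by compactness of $\mathcal{B}$ and of the $\mathcal{U}_{r_{i}}$, a Slater point of $\sigma _{r}^{\mathcal{B}}$ is an $\hat{x}$ for which the left-hand side of the displayed description of $F_{r}(\overline{A},\overline{b})$ is bounded above by some $-\varepsilon <0$ uniformly in $\lambda \in \mathcal{B}$. Given such an $\hat{x}$, raising each $r_{i}$ by a small $\delta >0$ changes that left-hand side by at most $\Vert (\hat{x},1)\Vert \,\delta \max_{\lambda \in \mathcal{B}}\sum _{i}|\lambda _{i}|$, so it stays negative and $\hat{x}\in F_{r+\delta 1_{m}}(\overline{A},\overline{b})\subseteq F_{r^{\prime }}(\overline{A},\overline{b})$ for all $r^{\prime }\le r+\delta 1_{m}$; since $r\in \mathbb{R}_{++}^{m}$, such $r^{\prime }$ fill an $\mathbb{R}^{m}$-neighbourhood of $r$, giving $r\in \mathop{\rm int}C(\overline{A},\overline{b})$. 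Conversely, $r\in \mathop{\rm int}C(\overline{A},\overline{b})$ forces $r+\delta 1_{m}\in C(\overline{A},\overline{b})$ for small $\delta >0$, so some $\hat{x}\in F_{r+\delta 1_{m}}(\overline{A},\overline{b})$, which makes the left-hand side for $r$ at most $-\delta \Vert (\hat{x},1)\Vert \sum _{i}|\lambda _{i}|$ for all $\lambda \in \mathcal{B}$; the hypothesis $\sum _{i}r_{i}\lambda _{i}>0$ for all $\lambda \in \mathcal{B}$ is what excludes degenerate $\lambda $ in the exact system $\sigma _{r}^{\mathcal{B}}$ of (2.2) and keeps this margin uniformly bounded away from $0$, so $\hat{x}$ is a Slater point. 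I expect part (c) to be the delicate one: it requires carefully matching the perturbation argument to the precise linear system $\sigma _{r}^{\mathcal{B}}$ and to the notion of Slater condition adopted in (2.2), including the uniform-margin equivalence (where compactness of $\mathcal{B}$ and the extra hypothesis enter) and the verification that nothing degenerates when some $r_{i}$ or some $\lambda _{i}$ vanishes; parts (a) and (b) are routine once the worst-case reformulation of $F_{r}(\overline{A},\overline{b})$ and the bipolar theorem are available.
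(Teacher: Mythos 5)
Your argument is correct, and in two of the three parts it genuinely departs from the paper's proof. For (a) you and the paper perform the same worst-case linearization of $F_{r}(\overline{A},\overline{b})$ over $\mathcal{B}\times\prod_{i}r_{i}\mathbb{B}_{n+1}$; you then close the argument with the bipolar theorem where the paper cites the LSIP existence theorem \cite[Corollary 3.1.1]{GL98} --- these are the same separation argument, so no real difference. For (b) your route is different and in fact more careful: the paper bounds $C(\overline{A},\overline{b})$ using the single constant $\mu=\min\{\min_{i}|\lambda_{i}|:\lambda\in\mathcal{B}\}$, which it asserts is positive because $0_{m}\notin\mathcal{B}$, but that constant vanishes for, e.g., the simplex base of $(\mathbb{R}_{+}^{m})^{\ast}$; your per-coordinate choice of $\lambda^{(i)}\in\mathcal{B}$ with $\lambda_{i}^{(i)}\neq 0$ (available since $\mathop{\rm int}K^{\ast}\neq\emptyset$) is exactly the repair needed. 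For (c), the forward implication is the same compactness-plus-uniform-margin perturbation as the paper's (you work with the scalarized worst-case form, the paper with the map $\Phi$ on the compact index set). The converse is where you truly diverge: the paper shows $\sigma_{r}^{\mathcal{B}}$ is stable under small coefficient perturbations and then invokes \cite[Theorem 6.1]{GL98} to extract a strong Slater point, and its rescaling by $\sum_{i}r_{i}\lambda_{i}$ is where the hypothesis $\sum_{i}r_{i}\lambda_{i}>0$ enters; you instead exhibit a Slater point directly, namely any $\hat{x}\in F_{r+\delta 1_{m}}(\overline{A},\overline{b})$, via the margin $-\delta\Vert(\hat{x},1)\Vert\sum_{i}|\lambda_{i}|$. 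That computation is valid and buys a slightly stronger statement, since the margin is uniformly positive already because $\min_{\lambda\in\mathcal{B}}\sum_{i}|\lambda_{i}|>0$ (compactness and $0_{m}\notin\mathcal{B}$); your closing remark attributing the uniform margin to the hypothesis $\sum_{i}r_{i}\lambda_{i}>0$ is therefore a misattribution --- harmless for correctness, but worth fixing, since in your proof that hypothesis is never used.
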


We now introduce the concept of radius of robust feasibility (\emph{RRF}) for $\left( P\right) ,$ as the supremum
of those $\alpha \geq 0$ such that $F_{\left( \alpha ,...,\alpha \right) }(%
\overline{A},\overline{b})\neq \emptyset .$

\begin{definition}[\textbf{Radius of robust feasibility}]
\label{Def_radius}The radius of robust feasibility (\emph{RRF}) for $\left( P\right) $ is defined as
\begin{equation*}
\rho (\overline{A},\overline{b}):=\sup \{\alpha \in \mathbb{R}_{+}:\alpha
1_{m}\in C(\overline{A},\overline{b})\},  \label{2.1}
\end{equation*}%
where $1_{m}$ is the vector in $\mathbb{R}^{m}$ whose components are all
equal to one.
\end{definition}

The RRF $\rho (\overline{A},\overline{b})$ (Definition \ref{Def_radius}) is,
roughly speaking, a measure of the maximal perturbations of a robust
optimization problem which result in the problem still being feasible. The
precise definition is a natural extension to robust conic linear programs of
the homonymous concept introduced in \cite{GJLP15} and \cite{GJLP} in the
robust LP and LSIP settings, respectively.

%
%

\begin{proposition}[\textbf{Basic properties of radius of robust feasibility}%
]
\label{Prop_Ro}The following properties of the RRF of (P) hold:\newline
(i) $\rho (\overline{A},\overline{b})=\sup\nolimits_{r\in C(\overline{A},%
\overline{b})}\min\nolimits_{1\leq i\leq m}r_{i}.$ \newline
(ii) If $\rho (\overline{A},\overline{b})>0,\ \left[ 0,\rho (\overline{A},%
\overline{b})\right) ^{m}\subseteq C(\overline{A},\overline{b}).$\newline
(iii) $\rho (\overline{A},\overline{b})>0$ if and only if $\mathop{\rm int}C(%
\overline{A},\overline{b})\neq \emptyset .$\newline
(iv) If there exists $\bar{x}\in \mathbb{R}^{n}$ such that $\overline{A}%
\overline{x}+\overline{b}\in -\mathop{\rm int}K$, then $\rho (\overline{A},%
\overline{b})>0.$
\end{proposition}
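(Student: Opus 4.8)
The plan is to reduce all four items to a single \emph{monotonicity} property of the admissible set: if $r\in C(\overline{A},\overline{b})$ and $r'\in\mathbb{R}_+^m$ satisfies $r'_i\le r_i$ for all $i$, then $r'\in C(\overline{A},\overline{b})$. This holds because $\mathcal{U}_{r'_i}\subseteq\mathcal{U}_{r_i}$ for each $i$, so the robust constraint defining $F_{r'}(\overline{A},\overline{b})$ involves fewer realizations than that defining $F_r(\overline{A},\overline{b})$, whence $F_r(\overline{A},\overline{b})\subseteq F_{r'}(\overline{A},\overline{b})$ and nonemptiness is inherited. (Radiance, already observed, is the particular case $r'=\mu r$; here we need the slightly stronger coordinatewise version.) Recall also that $0_m\in C(\overline{A},\overline{b})$ by the standing feasibility assumption on the nominal problem, and that $C(\overline{A},\overline{b})$ is bounded by Proposition~\ref{Prop_Consist_Set3}(b), so the suprema below are finite.

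For (i), I would prove both inequalities. Given $r\in C(\overline{A},\overline{b})$, set $\alpha:=\min_{1\le i\le m}r_i$; then $\alpha 1_m\le r$ coordinatewise, so monotonicity gives $\alpha 1_m\in C(\overline{A},\overline{b})$ and hence $\alpha\le\rho(\overline{A},\overline{b})$; taking the supremum over $r$ yields ``$\ge$''. Conversely every $\alpha\ge 0$ with $\alpha 1_m\in C(\overline{A},\overline{b})$ satisfies $\alpha=\min_i(\alpha 1_m)_i\le\sup_{r\in C(\overline{A},\overline{b})}\min_i r_i$, and the supremum over such $\alpha$ gives ``$\le$''. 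For (ii), assume $\rho:=\rho(\overline{A},\overline{b})>0$ and let $r\in[0,\rho)^m$; since there are finitely many coordinates, $t:=\max_i r_i<\rho$, so by definition of the supremum there exists $\alpha^\ast>t$ with $\alpha^\ast 1_m\in C(\overline{A},\overline{b})$, and then $r\le\alpha^\ast 1_m$ coordinatewise, so monotonicity gives $r\in C(\overline{A},\overline{b})$.

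Item (iii) then follows from (i) and (ii). If $\rho(\overline{A},\overline{b})>0$, then by (ii) the nonempty open box $\left(0,\rho(\overline{A},\overline{b})\right)^m$ lies inside $C(\overline{A},\overline{b})$, so $\mathop{\rm int}C(\overline{A},\overline{b})\neq\emptyset$. Conversely, if $r^0\in\mathop{\rm int}C(\overline{A},\overline{b})$, choose $\delta>0$ with $r^0+\delta\mathbb{B}_m\subseteq C(\overline{A},\overline{b})\subseteq\mathbb{R}_+^m$; testing the points $r^0-\tfrac{\delta}{2}e_i$ forces $r^0_i\ge\tfrac{\delta}{2}>0$ for every $i$, so $\min_i r^0_i>0$, and part (i) gives $\rho(\overline{A},\overline{b})\ge\min_i r^0_i>0$.

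Item (iv) is the only step needing a genuine estimate. Assuming $\overline{A}\bar{x}+\overline{b}\in-\mathop{\rm int}K$, openness of $-\mathop{\rm int}K$ provides $\varepsilon>0$ with $(\overline{A}\bar{x}+\overline{b})+\varepsilon\mathbb{B}_m\subseteq-K$. For any realization $(a_i,b_i)\in\mathcal{U}_{\alpha}$ one has $(a_i-\overline{a}_i,\,b_i-\overline{b}_i)\in\alpha\mathbb{B}_{n+1}$, hence by Cauchy--Schwarz $|a_i^T\bar{x}+b_i-(\overline{a}_i^T\bar{x}+\overline{b}_i)|\le\alpha\sqrt{\|\bar{x}\|^2+1}$ for each $i$, and therefore $\|(A\bar{x}+b)-(\overline{A}\bar{x}+\overline{b})\|\le\alpha\sqrt{m}\,\sqrt{\|\bar{x}\|^2+1}$. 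Taking $\alpha_0:=\varepsilon\big(\sqrt{m}\,\sqrt{\|\bar{x}\|^2+1}\big)^{-1}>0$, every such realization yields $A\bar{x}+b\in(\overline{A}\bar{x}+\overline{b})+\varepsilon\mathbb{B}_m\subseteq-K$, so $\bar{x}\in F_{\alpha_0 1_m}(\overline{A},\overline{b})$, i.e.\ $\alpha_0 1_m\in C(\overline{A},\overline{b})$ and hence $\rho(\overline{A},\overline{b})\ge\alpha_0>0$. The only place where care is required is the perturbation bookkeeping in (iv); everything else is a direct consequence of the coordinatewise monotonicity of $C(\overline{A},\overline{b})$.
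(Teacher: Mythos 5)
Your proof is correct and follows essentially the same route as the paper: your coordinatewise monotonicity of $C(\overline{A},\overline{b})$ is exactly the paper's identity $C(\overline{A},\overline{b})=\bigl(C(\overline{A},\overline{b})-\mathbb{R}_{+}^{m}\bigr)\cap\mathbb{R}_{+}^{m}$, from which (i)--(iii) are deduced in the same way (you merely supply the details the paper leaves implicit in (ii) and (iii)). For (iv) the paper argues qualitatively via continuity of $(A,b)\mapsto A\bar{x}+b$ near the Slater point, whereas you give an explicit quantitative bound $\alpha_{0}=\varepsilon\bigl(\sqrt{m}\,\sqrt{\Vert\bar{x}\Vert^{2}+1}\bigr)^{-1}$; both are valid and rest on the same idea.
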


\begin{proof}
We first see that the admissible set of parameters $C(\overline{A},\overline{%
b})$ satisfies
\begin{equation}  \label{eq:useful0}
C(\overline{A},\overline{b})=\left( C(\overline{A},\overline{b})-\mathbb{R}%
_{+}^{m}\right) \cap \mathbb{R}_{+}^{m}.
\end{equation}
If $r\in C(\overline{A},\overline{b}),$ by definition of $C(\overline{A},%
\overline{b}),$ any vector of $\left( r-\mathbb{R}_{+}^{m}\right) \cap
\mathbb{R}_{+}^{m}$ belongs to $C(\overline{A},\overline{b}).$ So, $\left( C(%
\overline{A},\overline{b})-\mathbb{R}_{+}^{m}\right) \cap \mathbb{R}%
_{+}^{m}\subseteq C(\overline{A},\overline{b}).$ Conversely, since
\begin{equation*}
C(\overline{A},\overline{b})=\left( C(\overline{A},\overline{b}%
)-0_{m}\right) \cap \mathbb{R}_{+}^{m}\subseteq \left( C(\overline{A},%
\overline{b})-\mathbb{R}_{+}^{m}\right) \cap \mathbb{R}_{+}^{m},
\end{equation*}%
we have $C(\overline{A},\overline{b})=\left( C(\overline{A},\overline{b})-%
\mathbb{R}_{+}^{m}\right) \cap \mathbb{R}_{+}^{m}.$

(i) Let $\alpha \geq 0$ be such that $\alpha 1_{m}\in C(\overline{A},%
\overline{b}).$ Since $\alpha =\min \left\{ \alpha ,...,\alpha \right\} \leq
\sup\nolimits_{r\in C(\overline{A},\overline{b})}\min\nolimits_{1\leq i\leq
m}r_{i}$, we have $\rho (\overline{A},\overline{b})\leq \sup\nolimits_{r\in
C(\overline{A},\overline{b})}\min\nolimits_{1\leq i\leq m}r_{i}.$

Conversely, given $\epsilon >0,$ there exists $r\in C(\overline{A},\overline{%
b})$ and $j\in \left\{ 1,...,m\right\} $ such that $r_{j}\leq r_{i}$ for all
$1\leq i\leq m$ and $\sup\nolimits_{r\in C(\overline{A},\overline{b}%
)}\min\nolimits_{1\leq i\leq m}r_{i}-\epsilon <r_{j}.$ Since $%
r-r_{j}1_{m}\in \mathbb{R}_{+}^{m},$ \eqref{eq:useful0} yields
\begin{equation*}
r_{j}1_{m}\in \left( r-\mathbb{R}_{+}^{m}\right) \cap \mathbb{R}%
_{+}^{m}\subseteq C(\overline{A},\overline{b}).
\end{equation*}%
Then, $r_{j}\leq \rho (\overline{A},\overline{b}),$ so that $%
\sup\nolimits_{r\in C(\overline{A},\overline{b})}\min\nolimits_{1\leq i\leq
m}r_{i}-\epsilon <\rho (\overline{A},\overline{b})$ for all $\epsilon >0.$
So, we see that $\sup\nolimits_{r\in C(\overline{A},\overline{b}%
)}\min\nolimits_{1\leq i\leq m}r_{i}\leq \rho (\overline{A},\overline{b}).$

(ii) It follows from \eqref{eq:useful0}.

(iii) The direct statement follows from (ii) and the converse from (i).

(iv) The mapping $\Phi \left( A,b\right) :=A\overline{x}+b$ is continuous on
$\mathbb{R}^{m\times n}\times \mathbb{R}^{m}$ and satisfies $\Phi \left(
\overline{A},\overline{b}\right) =\overline{A}\bar{x}+\overline{b}\in -%
\mathop{\rm int}K$, so that $\Phi \left( A,b\right) \in -K$ for $\left(
A,b\right) $ close enough to $\left( \overline{A},\overline{b}\right) .$
Thus, $\overline{x}\in F_{r}(\overline{A},\overline{b})$ for any $r\in
\mathbb{R}_{+}^{m}$ sufficiently close to $0_{m}$ and, so, $\rho (\overline{A%
},\overline{b})=\sup\nolimits_{r\in C(\overline{A},\overline{b}%
)}\min\nolimits_{1\leq i\leq m}r_{i}>0$.$\medskip $
\end{proof}

The characterization of $\rho (\overline{A},\overline{b})$ in Proposition %
\ref{Prop_Ro}(i)\ was used in\ \cite{GJLL16}\ as definition of RRF for\ a
class of convex problems with polynomial constraints without proving the
equivalence between both concepts. Statement (ii)\ shows the identifiable
part of $C(\overline{A},\overline{b})$ when $\rho (\overline{A},\overline{b}%
) $ can be computed (see Theorem \ref{TheorComput} below): the (non-closed)
hypercube $\left[ 0,\rho (\overline{A},\overline{b})\right) ^{m}.$ The
interior point condition $\overline{A}\overline{x}+\overline{b}\in -%
\mathop{\rm int}K$\ in statement (iv) is called \emph{Slater condition} for $%
\left( \overline{P}\right) .$


%

Next, we present a simple linear program example illustrating the admissible
set of parameters and the radius of robust feasibility. This example is
mainly used for illustrative purposes. Examples of conic program will also
be proposed based on this illustrative example and discussed later on.

\begin{example}[\textbf{Illustrative example}]
\label{example1} \label{Example1} Consider the simple linear program
\begin{equation}
\min_{x\in \mathbb{R}}\ x\medskip ,\text{ s.t. }\left[
\begin{array}{c}
2x \\
-x-3%
\end{array}%
\right] \in -\mathbb{R}_{+}^{2}.  \label{2.4}
\end{equation}%
Proposition \ref{Prop_Consist_Set3} part (a) will allow us to obtain a
tomographic description of the plane set
\begin{equation*}
C(\overline{A},\overline{b})=\left\{ r\in \mathbb{R}_{+}^{2}:\left(
0,1\right) \notin \overline{\mathop{\rm cone}\left\{ \left( \left(
2,0\right) +r_{1}\mathbb{B}_{2}\right) \cup \left( \left( -1,-3\right) +r_{2}%
\mathbb{B}_{2}\right) \right\} }\right\}
\end{equation*}%
as the union of its intersections with all vertical lines. \newline
Given $r=\left( r_{1},r_{2}\right) \in \mathbb{R}^{2}$ such that $r_{1}\geq
2,\ F_{r}(\overline{A},\overline{b})=\emptyset .$ So, we fix $0\leq r_{1}<2$
and calculate those $r_{2}$ such that $F_{r}(\overline{A},\overline{b})\neq
\emptyset .$ For $r_{1}=0$, $F_{r}(\overline{A},\overline{b})\neq \emptyset $
if and only if $0\leq r_{2}\leq d\left( \left( -1,3\right) ,y=0\right) =3.$
We now take $0<r_{1}<2.$ The tangent lines from $0_{2}$ to $\mathop{\rm bd}%
\left( \left( 2,0\right) +r_{1}\mathbb{B}_{2}\right) $ are $\pm r_{1}x-\sqrt{%
4-r_{1}^{2}}y=0,$ denoted by $L_{+}$ and $L_{-},$ respectively. So, $F_{r}(%
\overline{A},\overline{b})\neq \emptyset $ if and only if $\left(
-1,3\right) $ is above $L_{+}$ and $L_{-},\ $i.e.,$\ r_{1}\leq \frac{6}{%
\sqrt{10}}\ $and
\begin{equation*}
r_{2}\leq \min \left\{ d\left( \left( -1,3\right) ,L_{+}\right) ,d\left(
\left( -1,3\right) ,L_{-}\right) \right\} =d\left( \left( -1,3\right)
,L_{-}\right) =\frac{3\sqrt{4-r_{1}^{2}}-r_{1}}{2}.
\end{equation*}%
This amounts to saying that%
\begin{equation*}
\begin{array}{ll}
C(\overline{A},\overline{b}) & =\left\{ r\in \mathbb{R}_{+}^{2}:0\leq
r_{1}\leq 3\sqrt{\frac{2}{5}},0\leq r_{2}\leq \frac{3\sqrt{4-r_{1}^{2}}-r_{1}%
}{2}\right\} \\
& =\left\{ r\in \mathbb{R}_{+}^{2}:5r_{1}^{2}+2r_{1}r_{2}+2r_{2}^{2}\leq
18\right\} .%
\end{array}
\label{2.9}
\end{equation*}%

For problem given in \eqref{2.4}, we illustrate how Proposition \ref%
{Prop_Ro} can help to find the greatest square supported by the coordinate
axes contained in $C(\overline{A},\overline{b}).$ As the\allowbreak\ line $%
r_{2}=r_{1}$ intersects $\mathop{\rm bd}C(\overline{A},\overline{b})$ at $%
\left( 0,0\right) $ and $r^{1}:=\left( \sqrt{2},\sqrt{2}\right) ,$ the
greatest $\alpha $ such that $F_{\left( \alpha ,\alpha \right) }\neq
\emptyset $ is $\rho (\overline{A},\overline{b})=\sqrt{2},$ with $F_{r^{1}}(%
\overline{A},\overline{b})=\left\{ \allowbreak -1\right\} $ (a singleton
set). Moreover, according to Proposition \ref{Prop_Ro}(ii), the feasibility
of the robust counterpart is guaranteed for any $r$ in the square $\left[
0,\rho (\overline{A},\overline{b})\right) ^{2}$ (possibly with $r_{2}\neq
r_{1}$).\
\end{example}

\section{Bounds for Radius of Robust Feasibility}

In this section, we obtain lower and upper bounds for $\rho (\overline{A},%
\overline{b}),$ for an arbitrary compact base $\mathcal{B}$\ of $K^{\ast },$%
\ by using the following lemma. We note that this lemma is an auxiliary
result using convex analysis, which allows to replace the finite set $C\ $%
(or, equivalently, the polytope $\mathop{\rm conv}C$) in \cite[Lemma 3]%
{GJLP15} by an arbitrary compact convex set; our proof here emphasizes the
role played by the limit superior of some sequence of scalars which arises
in the argument.

\begin{lemma}
\label{lemma:1} Let $C\subseteq \mathbb{R}^{n+1}$ be a nonempty compact
convex set and $\alpha \geq 0$. Suppose that
\begin{equation}
(0_{n},1)\in \overline{\mathop{\rm cone}\left( C+\alpha \mathbb{B}%
_{n+1}\right) }.  \label{3.1}
\end{equation}%
Then, for all $\epsilon >0$, we have
\begin{equation*}
(0_{n},1)\in \mathop{\rm cone}\big( C+(\alpha +\epsilon )\mathbb{B}_{n+1}%
\big) .
\end{equation*}
\end{lemma}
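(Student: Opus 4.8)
The plan is to argue directly from the definitions of closure and of conical hull, organising everything around a dichotomy on the boundedness of the multipliers that appear. By the hypothesis (\ref{3.1}) there exist sequences $\lambda_k\geq 0$, $c_k\in C$ and $u_k\in\mathbb{B}_{n+1}$ with $\lambda_k(c_k+\alpha u_k)\to(0_n,1)$; here I use that $C+\alpha\mathbb{B}_{n+1}$ is itself convex, so every element of $\mathop{\rm cone}(C+\alpha\mathbb{B}_{n+1})$ has the form $\lambda(c+\alpha u)$ with $\lambda\geq 0$, $c\in C$, $u\in\mathbb{B}_{n+1}$. Since $C$ and $\mathbb{B}_{n+1}$ are compact, after passing to a subsequence I may assume $c_k\to c\in C$ and $u_k\to u\in\mathbb{B}_{n+1}$, so that $y_k:=c_k+\alpha u_k\to y:=c+\alpha u\in C+\alpha\mathbb{B}_{n+1}$.

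Now I split according to $\limsup_k\lambda_k$. If this limit superior is finite, then along a further subsequence $\lambda_k\to\lambda\geq 0$, and hence $(0_n,1)=\lim_k\lambda_k y_k=\lambda y\in\mathop{\rm cone}(C+\alpha\mathbb{B}_{n+1})\subseteq\mathop{\rm cone}(C+(\alpha+\epsilon)\mathbb{B}_{n+1})$ — in this case the enlargement of the ball is not even needed. The substantive case is $\limsup_k\lambda_k=+\infty$; passing to a subsequence, $\lambda_k\to+\infty$. Since $\lambda_k\|y_k\|=\|\lambda_k y_k\|\to\|(0_n,1)\|=1$, this forces $\|y_k\|\to 0$, i.e. $y=0$; moreover $y_k\neq 0$ for all large $k$ (because $\lambda_k y_k\to(0_n,1)\neq 0$), and the unit vectors $g_k:=y_k/\|y_k\|=\lambda_k y_k/\|\lambda_k y_k\|$ converge to $(0_n,1)$.

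The key observation is then that nudging $y_k$ along the direction $g_k$ with weight $\epsilon$ keeps the point inside the enlarged set: since $\|\alpha u_k+\epsilon g_k\|\leq\alpha\|u_k\|+\epsilon\|g_k\|\leq\alpha+\epsilon$, we have $c_k+\alpha u_k+\epsilon g_k\in C+(\alpha+\epsilon)\mathbb{B}_{n+1}$, and this point equals $y_k+\epsilon g_k=(\|y_k\|+\epsilon)g_k$. Letting $k\to\infty$ and using that $C+(\alpha+\epsilon)\mathbb{B}_{n+1}$ is closed, I get $\epsilon(0_n,1)=\lim_k(\|y_k\|+\epsilon)g_k\in C+(\alpha+\epsilon)\mathbb{B}_{n+1}$, and therefore $(0_n,1)=\tfrac{1}{\epsilon}\,\epsilon(0_n,1)\in\mathop{\rm cone}(C+(\alpha+\epsilon)\mathbb{B}_{n+1})$, as required.

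The main obstacle is precisely this unbounded-multiplier case: when $\limsup_k\lambda_k=+\infty$ the closure in (\ref{3.1}) is genuinely doing work — geometrically, $0$ may lie on the boundary of $C+\alpha\mathbb{B}_{n+1}$, so $(0_n,1)$ can be a ``limiting'' ray of the conical hull without belonging to it — and the content of the lemma is that increasing the ball radius by any $\epsilon>0$ supplies exactly the slack needed to realise that ray inside the new (closed) conical hull. I would present the sequential argument above as the main proof, since it is self-contained and makes the role of $\limsup_k\lambda_k$ transparent; an alternative shorter route is to note that if $0\in C+\alpha\mathbb{B}_{n+1}$ then $\epsilon\mathbb{B}_{n+1}\subseteq C+(\alpha+\epsilon)\mathbb{B}_{n+1}$, so $0$ is interior and the enlarged conical hull is all of $\mathbb{R}^{n+1}$, whereas if $0\notin C+\alpha\mathbb{B}_{n+1}$ then, this set being compact, convex and origin-free, $\mathop{\rm cone}(C+\alpha\mathbb{B}_{n+1})$ is already closed and hence contains $(0_n,1)$.
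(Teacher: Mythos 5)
Your proof is correct, and while it shares the paper's overall skeleton --- writing $(0_n,1)$ as a limit of points $\lambda_k(c_k+\alpha u_k)$ and splitting on $\limsup_k\lambda_k$ --- the way you resolve each case is genuinely different. In the bounded case you exploit compactness of $C$ and $\mathbb{B}_{n+1}$ to pass to limits and conclude that $(0_n,1)$ already lies in $\mathop{\rm cone}(C+\alpha\mathbb{B}_{n+1})$ with no enlargement needed, whereas the paper keeps $C$ untouched and absorbs the approximation error into an extra $\frac{\epsilon\mu}{2\mu_k}$-perturbation of the ball. More significantly, in the unbounded case the paper argues by contradiction via the separation theorem, while you give a direct argument: $\lambda_k\to+\infty$ forces $y_k=c_k+\alpha u_k\to 0$ with normalized directions $g_k\to(0_n,1)$, and the nudged points $(\|y_k\|+\epsilon)g_k$ stay in the compact (hence closed) set $C+(\alpha+\epsilon)\mathbb{B}_{n+1}$, yielding $\epsilon(0_n,1)$ in the limit. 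Your route is more constructive and makes the geometric role of the $\epsilon$-enlargement visible (it buys exactly the slack to realise the limiting ray), at the mild cost of leaning on compactness of $C$ in both cases, which the hypotheses supply anyway. Your closing alternative --- distinguishing whether $0\in C+\alpha\mathbb{B}_{n+1}$ (then $0$ becomes interior after enlargement and the cone is everything) or $0\notin C+\alpha\mathbb{B}_{n+1}$ (then the conical hull of a compact convex set missing the origin is already closed) --- is an even shorter correct argument, provided you justify that last closedness fact, which follows by the same bounded-multiplier reasoning.
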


\begin{proof}
Let $\epsilon >0$. Since $(0_{n},1)\in \overline{\mathop{\rm cone}\left(
C+\alpha \mathbb{B}_{n+1}\right) }=\overline{\mathbb{R}_{+}\left( C+\alpha
\mathbb{B}_{n+1}\right) }$, there exist sequences $\{(y_{k},s_{k})\}_{k\in
\mathbb{N}}\subseteq \mathbb{R}^{n+1},$ $\{\mu _{k}\}_{k\in \mathbb{N}%
}\subseteq \mathbb{R}_{+},$ $\{(x_{k},t_{k})\}_{k\in \mathbb{N}}\subseteq C$
and $\{(z_{k},w_{k})\}_{k\in \mathbb{N}}\subseteq \mathbb{B}_{n+1}$ such
that
\begin{equation}
(y_{k},s_{k})={\mu _{k}\left( (x_{k},t_{k})+\alpha (z_{k},w_{k})\right) }%
\rightarrow (0_{n},1).  \label{3.2}
\end{equation}%
Two cases are possible for $\{\mu _{k}\}_{k\in \mathbb{N}}.$

Case 1: $\limsup {\mu _{k}<+\infty .}$ We can assume that ${\mu
_{k}\longrightarrow \mu \in }\mathbb{R}_{+}$ as $k\rightarrow \infty .$ From
(\ref{3.2}), ${\mu >0.}$ Then, for sufficiently large $k$, one has $\frac{{\mu }}{2{\mu _{k}}}<1$ and $${\mu
_{k}\left( (x_{k},t_{k})+\alpha (z_{k},w_{k})\right) \in }(0_{n},1)+\frac{%
\epsilon {\mu }}{2}\mathbb{B}_{n+1}.$$ Let
$\left( u_{k},v_{k}\right) \in \mathbb{B}_{n+1}$ be such that ${\mu
_{k}\left( (x_{k},t_{k})+\alpha (z_{k},w_{k})\right) =}(0_{n},1)+\frac{%
\epsilon {\mu }}{2}\left( u_{k},v_{k}\right) .$ Then,
\begin{equation*}
(0_{n},1)={\mu _{k}\left( (x_{k},t_{k})+\alpha (z_{k},w_{k})-\frac{\epsilon {%
\mu }}{2{\mu _{k}}}\left( u_{k},v_{k}\right) \right) }\in \mathop{\rm cone}%
\left( C+(\alpha +\epsilon )\mathbb{B}_{n+1}\right) .
\end{equation*}%
Case 2: $\limsup {\mu _{k}=+\infty .}$ We may assume that $\mu
_{k}\rightarrow +\infty $ as $k\rightarrow \infty $. We also assume by
contradiction that
\begin{equation*}
(0_{n},1)\notin \mathop{\rm cone}\left( C+(\alpha +\epsilon )\mathbb{B}%
_{n+1}\right) .
\end{equation*}%
Then, by the separation theorem, there exists $(\xi ,r)\in \mathbb{R}%
^{n+1}\backslash \{0_{n+1}\}$ such that
\begin{equation}
r=\langle (\xi ,r),(0_{n},1)\rangle \leq 0\leq \langle (\xi ,r),(y,s)\rangle
,  \label{useful}
\end{equation}%
for all $(y,s)\in \mathop{\rm cone}\left( C+(\alpha +\epsilon )\mathbb{B}%
_{n+1}\right) .$ Let $(y,s):=\frac{(\xi ,r)}{\Vert (\xi ,r)\Vert }\in
\mathbb{B}_{n+1}.$ Note that
\begin{equation*}
{\mu _{k}\left( (x_{k},t_{k})+\alpha (z_{k},w_{k})-\epsilon (y,s)\right) }%
\in \mathbb{R}_{+}\left( C+(\alpha +\epsilon )\mathbb{B}_{n+1}\right) .
\end{equation*}%
Then, (\ref{3.2}) and \eqref{useful} imply that
\begin{eqnarray*}
0 &\leq &\langle (\xi ,r),{\mu _{k}\left( (x_{k},t_{k})+\alpha
(z_{k},w_{k})\right) }\rangle -\mu _{k}\,\epsilon \Vert (\xi ,r)\Vert
\medskip \\
&=&\langle (\xi ,r),(y_{k},s_{k})\rangle -\mu _{k}\epsilon \Vert (\xi
,r)\Vert ,
\end{eqnarray*}%
with $\langle (\xi ,r),(y_{k},s_{k})\rangle \rightarrow r$ and $\mu
_{k}\epsilon \Vert (\xi ,r)\Vert \rightarrow +\infty .$ We got a
contradiction.
\end{proof}

As an illustration of the above discussion on the value of $\limsup {\mu _{k}%
} $, if we take $C=\left\{ \left( 1,0_{n}\right) \right\} ,$ (\ref{3.1})
holds if and only if $\alpha \geq 1,$ we are necessarily in Case 2 when $%
\alpha =1, $ and both cases are possible when $\alpha >1.$

To obtain the lower and upper bound for the RRF, we need the following
definition of epigraphical set.

\begin{definition}[\textbf{Epigraphical set of }$(P)$]
\label{Def_epi}The \emph{epigraphical set} of $(P)$ associated with a
compact base $\mathcal{B}$ of $K^{\ast }$ is the set%
\begin{equation*}
\begin{array}{ll}
E(\overline{A},\overline{b},\mathcal{B}) & :=\left\{ \lambda ^{T}\left[\,
\overline{A}\mid -\overline{b} \, \right] :\lambda \in \mathcal{B}\right\}
+\{0_{n}\}\times \mathbb{R}_{+}.%
\end{array}
\label{3.4}
\end{equation*}
\end{definition}

{The concept of epigraphical set is the adaptation to robust conic LP of the
hypographical set introduced in \cite{CLPT05} to measure the distance to
ill-posedness in the framework of quantitative stability in LSIP. In
contrast with the LP and LSIP adaptations in \cite{GJLP} and \cite{GJLP15},
the epigraphical set $E(\overline{A},\overline{b},\mathcal{B})$\ not only
depends here on the nominal data ($\overline{A}$ and $\overline{b}$), but
also on the chosen compact basis $\mathcal{B}$ of $K^{\ast }.$}

Obviously, $E(\overline{A},\overline{b},\mathcal{B})$\ is a closed convex
set. We observe that $0_{n+1}\notin \mathop{\rm int}E(\overline{A},\overline{%
b},\mathcal{B}).$ To show this, we argue by contradiction and assume that $%
\epsilon \mathbb{B}_{n+1}\subseteq E(\overline{A},\overline{b},\mathcal{B})$
for some $\epsilon >0.$ Then there exist $\widetilde{\lambda }\in \mathcal{B}
$ and $\mu \geq 0$ such that $\left( 0_{n},-\epsilon \right) =\widetilde{%
\lambda }^{T}\left[ \overline{A}\mid -\overline{b}\right] +\left( 0_{n},\mu
\right) $ and we have
\begin{equation*}
\left( 0_{n},1\right) =\frac{1}{\epsilon +\mu }\widetilde{\lambda }^{T}\left[
\overline{A}\mid \overline{b}\right] \in \left\{ \lambda ^{T}\left[
\overline{A}\mid \overline{b}\right] :\lambda \in K^{\ast }\right\} ,
\end{equation*}%
so that $F_{0_{m}}(\overline{A},\overline{b})=\emptyset $\ (contradiction).

\begin{theorem}[\textbf{Lower/Upper bounds for the radius of robust
feasibility}]
\label{th:bound} Let $\mathcal{B}$ be a compact base of $K^{\ast }$. Then,
the RRF satisfies
\begin{equation}
C_{1}\mathop{\rm dist}\left( 0_{n+1},E(\overline{A},\overline{b},\mathcal{B}%
)\right) \leq \rho (\overline{A},\overline{b})\leq C_{2}\mathop{\rm dist}%
\left( 0_{n+1},E(\overline{A},\overline{b},\mathcal{B})\right) ,
\end{equation}%
where
\begin{equation*}
C_{1}=C_{1}(\mathcal{B})=1/\max \left\{ \Vert \sum_{i=1}^{m}\lambda
_{i}u_{i}\Vert :\lambda \in \mathcal{B},\Vert u_{i}\Vert \leq 1\right\}, %
\end{equation*}
\begin{equation*}
C_{2}=C_{2}(\mathcal{B})=1/\min \left\{ \sum_{i=1}^{m}|\lambda_{i}|:\lambda \in \mathcal{B}\right\} .
\end{equation*}
\end{theorem}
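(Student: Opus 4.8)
The plan is to recast the dual description of $C(\overline{A},\overline{b})$ in Proposition~\ref{Prop_Consist_Set3}(a) into a form to which Lemma~\ref{lemma:1} applies, and then to identify the critical uncertainty level with $\mathop{\rm dist}(0_{n+1},E(\overline{A},\overline{b},\mathcal{B}))$ up to the factors $C_1$ and $C_2$. Fix a compact base $\mathcal{B}$. Since $\mathbb{B}_{n+1}$ is symmetric, for every $\lambda\in\mathcal{B}$ and $\alpha\ge0$ one has
\[
\sum_{i=1}^{m}\lambda_i\big((\overline{a}_i,\overline{b}_i)+\alpha\mathbb{B}_{n+1}\big)=(\overline{A}^{T}\lambda,\overline{b}^{T}\lambda)+\alpha\Big(\textstyle\sum_{i=1}^{m}|\lambda_i|\Big)\mathbb{B}_{n+1}.
\]
Let $\widetilde{D}:=\{(\overline{A}^{T}\lambda,\overline{b}^{T}\lambda):\lambda\in\mathcal{B}\}$, a compact convex set as the linear image of $\mathcal{B}$, and put $p:=\max_{\lambda\in\mathcal{B}}\sum_{i=1}^{m}|\lambda_i|$, $q:=\min_{\lambda\in\mathcal{B}}\sum_{i=1}^{m}|\lambda_i|$; both are finite and strictly positive because $\mathcal{B}$ is compact with $0_m\notin\mathcal{B}$. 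A routine estimate (the triangle inequality, with equality at $u_i=\mathrm{sgn}(\lambda_i)\,e$ for a fixed unit vector $e$) shows $p=1/C_1$, and $q=1/C_2$ holds by definition. Since $q\,\mathbb{B}_{n+1}\subseteq\big(\sum_i|\lambda_i|\big)\mathbb{B}_{n+1}\subseteq p\,\mathbb{B}_{n+1}$ for $\lambda\in\mathcal{B}$, and the Minkowski sum distributes over unions,
\[
\widetilde{D}+\alpha q\,\mathbb{B}_{n+1}\ \subseteq\ \bigcup_{\lambda\in\mathcal{B}}\Big\{\textstyle\sum_{i=1}^{m}\lambda_i\big((\overline{a}_i,\overline{b}_i)+\alpha\mathbb{B}_{n+1}\big)\Big\}\ \subseteq\ \widetilde{D}+\alpha p\,\mathbb{B}_{n+1}.
\]

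Next I would determine the critical radius of $\widetilde{D}$. As $\widetilde{D}+\beta\mathbb{B}_{n+1}$ is convex, $\mathop{\rm cone}(\widetilde{D}+\beta\mathbb{B}_{n+1})=\mathbb{R}_{+}(\widetilde{D}+\beta\mathbb{B}_{n+1})$, so $(0_n,1)\in\mathop{\rm cone}(\widetilde{D}+\beta\mathbb{B}_{n+1})$ if and only if $\mathop{\rm dist}\big((0_n,s),\widetilde{D}\big)\le\beta$ for some $s>0$. Using the symmetry of the Euclidean norm in the last coordinate together with $\inf_{t>0}h(t)=\inf_{t\ge0}h(t)$ for continuous $h$, one obtains $\inf_{s>0}\mathop{\rm dist}\big((0_n,s),\widetilde{D}\big)=\mathop{\rm dist}\big(0_{n+1},E(\overline{A},\overline{b},\mathcal{B})\big)=:\delta$. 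Hence $(0_n,1)\notin\mathop{\rm cone}(\widetilde{D}+\beta\mathbb{B}_{n+1})$ for $\beta<\delta$, and $(0_n,1)\in\mathop{\rm cone}(\widetilde{D}+\beta\mathbb{B}_{n+1})$ for $\beta>\delta$.

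With these ingredients I would assemble the bounds through Proposition~\ref{Prop_Consist_Set3}(a) (taking $r=\alpha 1_m$) and Definition~\ref{Def_radius}. For the upper bound, take $\alpha>C_2\delta$; then $\alpha q>\delta$, so $(0_n,1)\in\mathop{\rm cone}(\widetilde{D}+\alpha q\,\mathbb{B}_{n+1})$, and by the left inclusion above and monotonicity of the conical hull, $(0_n,1)$ lies in $\mathop{\rm cone}\bigcup_{\lambda\in\mathcal{B}}\big\{\sum_i\lambda_i\big((\overline{a}_i,\overline{b}_i)+\alpha\mathbb{B}_{n+1}\big)\big\}$, hence in its closure; thus $\alpha 1_m\notin C(\overline{A},\overline{b})$ and $\rho(\overline{A},\overline{b})\le\alpha$, so letting $\alpha\downarrow C_2\delta$ gives $\rho(\overline{A},\overline{b})\le C_2\delta$. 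For the lower bound, take $\alpha<C_1\delta$; then $\alpha p<\delta$, so there is $\epsilon>0$ with $\alpha p+\epsilon<\delta$, whence $(0_n,1)\notin\mathop{\rm cone}(\widetilde{D}+(\alpha p+\epsilon)\mathbb{B}_{n+1})$ and, by the contrapositive of Lemma~\ref{lemma:1} applied to the compact convex set $\widetilde{D}$, $(0_n,1)\notin\overline{\mathop{\rm cone}(\widetilde{D}+\alpha p\,\mathbb{B}_{n+1})}$; by the right inclusion above and monotonicity, $(0_n,1)\notin\overline{\mathop{\rm cone}\bigcup_{\lambda\in\mathcal{B}}\big\{\sum_i\lambda_i\big((\overline{a}_i,\overline{b}_i)+\alpha\mathbb{B}_{n+1}\big)\big\}}$, i.e.\ $\alpha 1_m\in C(\overline{A},\overline{b})$ and $\rho(\overline{A},\overline{b})\ge\alpha$, so letting $\alpha\uparrow C_1\delta$ gives $\rho(\overline{A},\overline{b})\ge C_1\delta$. (When $\delta=0$ both inequalities are trivial.)

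The step I expect to be the main obstacle is the first reduction: the set in Proposition~\ref{Prop_Consist_Set3}(a) is a union of balls of \emph{different} radii $\alpha\sum_i|\lambda_i|$ about the points of $\widetilde{D}$, hence not of the shape ``compact convex set $+$ ball'' required by Lemma~\ref{lemma:1}; compactness of $\mathcal{B}$ is exactly what permits trapping it between two such sets, and this trapping is what produces the slacks $C_1$ and $C_2$ rather than the exact value obtained when $\sum_i|\lambda_i|$ is constant on $\mathcal{B}$ (as can be arranged for linear programs, where $K^{\ast}=\mathbb{R}_+^{m}$ and one may take $\mathcal{B}=\Delta_m$). A secondary subtlety is the passage from the \emph{closed} conical hull in the dual characterization to the conical hull, which is precisely the content of Lemma~\ref{lemma:1} and is needed only for the lower bound.
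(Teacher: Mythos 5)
Your proof is correct, and at its core it runs along the same route as the paper's: the dual characterization of $C(\overline{A},\overline{b})$ from Proposition \ref{Prop_Consist_Set3}(a), Lemma \ref{lemma:1} to pass between the closed and non-closed conical hulls, and the two constants arising from estimating $\sum_{i}\lambda _{i}u_{i}$. What you do differently is the packaging. The paper argues pointwise: for the lower bound it takes $\rho >\rho (\overline{A},\overline{b})$, majorizes the perturbation by $\rho w_{1}\mathbb{B}_{n+1}$, applies Lemma \ref{lemma:1} in the forward direction, and then extracts a convex combination $\gamma \in \mathcal{B}$ to exhibit a point of $E(\overline{A},\overline{b},\mathcal{B})$ in $(\rho w_{1}+\epsilon )\mathbb{B}_{n+1}$; for the upper bound it constructs explicit vectors $(u_{i},r_{i})$ with $\sum_{i}\widetilde{\lambda }_{i}(u_{i},r_{i})=(u,r)$ and $\Vert (u_{i},r_{i})\Vert \leq w_{2}^{-1}$. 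You instead use the identity $\sum_{i}\lambda _{i}\mathbb{B}_{n+1}=\bigl(\sum_{i}|\lambda _{i}|\bigr)\mathbb{B}_{n+1}$ to sandwich the whole index set between $\widetilde{D}+\alpha q\,\mathbb{B}_{n+1}$ and $\widetilde{D}+\alpha p\,\mathbb{B}_{n+1}$, identify once and for all the critical radius of the family $\widetilde{D}+\beta \mathbb{B}_{n+1}$ with $\mathop{\rm dist}\bigl(0_{n+1},E(\overline{A},\overline{b},\mathcal{B})\bigr)$, and then read off both bounds by monotonicity, with Lemma \ref{lemma:1} entering only in contrapositive form for the lower bound. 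This modular organization replaces the paper's two ad hoc constructions by a single set-inclusion argument and makes visible that both constants come from the range of $\Vert \lambda \Vert _{1}$ on $\mathcal{B}$; in particular your observation that $1/C_{1}=\max \{\Vert \sum_{i}\lambda _{i}u_{i}\Vert \}$ equals $\max_{\lambda \in \mathcal{B}}\sum_{i}|\lambda _{i}|$ (equality in the triangle inequality at $u_{i}=\mathrm{sign}(\lambda _{i})e$) is not stated in the paper, though it is consistent with its later computations in the SOC and SDP corollaries. One cosmetic slip: when $\delta =0$ the upper bound $\rho (\overline{A},\overline{b})\leq 0$ is not ``trivial,'' but it is still delivered by your limiting argument $\alpha \downarrow C_{2}\delta $; only the lower bound degenerates there.
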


{}

\begin{remark}
Before get into the proof of Theorem \ref{th:bound}, we first make the
following remark:

\begin{itemize}
\item (Invariance of the bounds in scaling the compact base) We first
observe that, if $\mathcal{B}$ is a compact base for $K^*$, then $\mu
\mathcal{B}$ is also a compact base for $K^*$ for any $\mu>0$. On the other
hand, note that for any $\mu>0$, $C_i(\mu \mathcal{B})=\frac{1}{\mu} C_i(%
\mathcal{B})$, $i=1,2$ and $\mathop{\rm dist}\left( 0_{n+1},E(\overline{A},%
\overline{b},\mu \mathcal{B})\right)=\mu \mathop{\rm dist}\left( 0_{n+1},E(%
\overline{A},\overline{b},\mathcal{B})\right)$ (see \eqref{3.7}). So, we see
that the lower/upper bounds remain the same if we replace $\mathcal{B}$ by $%
\mu \mathcal{B}$ with any $\mu>0$.

\item (Tightness of the bounds) As we will see in Example 3.1 and Example 3.2, the obtained
bounds can be tight.

\item (Gaps between the lower and upper bounds) Denote the ratio between the
lower bound and upper bound by $\tau$. Then, $\tau=C_1/C_2 \in (0,1]$. In
general, this ratio can depend on the dimension of the cone $K^*$. For
example, if $K$ is the second-order cone in $\mathbb{R}^m$, then, $\tau=%
\frac{1}{\sqrt{m-1}+1}$ (Corollary \ref{Corol_SOCP}). If $K$ is the positive
semi-definite cone $S^q_+$, then, $\tau \in [\frac{2}{q^{3/2}(q+1)},1]$
(Corollary \ref{Corol_SDP}).
\end{itemize}
\end{remark}

\begin{proof}
Let $\rho >\rho (\overline{A},\overline{b}).$ Then, $\rho 1_{m}\notin C(%
\overline{A},\overline{b}).$\ Recall that
\begin{equation*}
C(\overline{A},\overline{b})=\left\{ r\in \mathbb{R}_{+}^{m}:\left(
0_{n},1\right) \notin \overline{\mathop{\rm cone}\bigcup\limits_{\lambda \in
\mathcal{B}}\left\{ \sum\limits_{i=1}^{m}\lambda _{i}\left( (\overline{a}%
_{i},\overline{b}_{i})+r_{i}\mathbb{B}_{n+1}\right) \right\} }\right\} .
\end{equation*}%
This shows that
\begin{equation*}
(0_{n},1)\in \overline{\mathop{\rm cone}\left[ \left\{ \lambda ^{T}\left[
\overline{A}\mid \overline{b}\right] +\rho \sum_{i=1}^{m}\lambda
_{i}u_{i}:\lambda \in \mathcal{B},u_{i}\in \mathbb{B}_{n+1}\right\} \right] }%
,
\end{equation*}
%
%
%
%
%
%
%
%
%
%
%
%
%
%
%
%
%
%
%
%
%
%
%
%
%
%
%
%
%
%
%
%
%
%
%
%
%
%
%
%
%
%
%
%
%
%
%
%
%
%
Thus, letting $w_{1}:=\max \{\Vert \sum_{i=1}^{m}\lambda _{i}u_{i}\Vert
:\lambda \in \mathcal{B},\Vert u_{i}\Vert \leq 1\}$, we have
\begin{equation*}
(0_{n},1)\in \overline{\mathop{\rm cone}\left[ \left\{ \lambda ^{T}\left[
\overline{A}\mid \overline{b}\right] +\rho w_{1}\mathbb{B}_{n+1}:\lambda \in
\mathcal{B}\right\} \right] },
\end{equation*}%
%
%
%
%
%
%
%
%
%
%
%
%
%
%
%
%
%
%
%
%
%
%
%
%
%
%
%
%
%
%
%
%
%
%
%
%
%
%
%
%
%
%
%
%
%
%
%
%
%
%
%
%
%
%
%
%
%
%
%
%
%
%
%
%
%
%
%
where $\left\{ \lambda ^{T}\left[ \overline{A}\mid \overline{b}\right]
:\lambda \in \mathcal{B}\right\} $ is a compact convex set. Thus, according
to Lemma \ref{lemma:1},\
\begin{equation*}
(0_{n},1)\in \mathop{\rm cone}\left[ \left\{ \lambda ^{T}\left[ \overline{A}%
\mid \overline{b}\right] :\lambda \in \mathcal{B}\right\} +(\rho
w_{1}+\epsilon )\mathbb{B}_{n+1}\right] ,\text{ }\forall \epsilon >0,
\end{equation*}%
or, equivalently,
\begin{equation*}
(0_{n},-1)\in \mathop{\rm cone}\left[ \left\{ \lambda ^{T}\left[ \overline{A}%
\mid -\overline{b}\right] :\lambda \in \mathcal{B}\right\} +(\rho
w_{1}+\epsilon )\mathbb{B}_{n+1}\right] ,\text{ }\forall \epsilon >0.
\end{equation*}%
Hence, there exist $\bar{\lambda}^{j}\in \mathcal{B},$ $j=1,...,n,$ $\mu
_{j}\geq 0$, and vectors $\left( u_{j},s_{j}\right) $, $j=1,\ldots ,n+1$,
such that $\Vert \left( u_{j},s_{j}\right) \Vert \leq 1$ and
\begin{equation}
\sum_{j=1}^{n+1}\mu _{j}\sum_{i=1}^{m}\bar{\lambda}_{i}^{j}(\overline{a}%
_{i},-\overline{b}_{i})+(0_{n},1)=-(\rho w_{1}+\epsilon )\sum_{j=1}^{n+1}\mu
_{j}\left( u_{j},s_{j}\right) .
\end{equation}%
Clearly, we observe that $\sum_{j=1}^{n+1}\mu _{j}>0$. Dividing both sides
by $\sum_{j=1}^{n+1}\mu _{j}$, we have
\begin{equation*}
\sum_{i=1}^{m}\gamma _{i}(\overline{a}_{i},-\overline{b}_{i})+\left( 0_{n},{%
\textstyle {\frac{1 }{\sum_{j=1}^{n+1}\mu _{j}}}}\right) =-(\rho
w_{1}+\epsilon )\left( u,s\right) ,
\end{equation*}%
where $(u,s)=\frac{\sum_{j=1}^{n+1}\mu _{j}(u_{j},s_{j})}{%
\sum_{j=1}^{n+1}\mu _{j}}\in \mathbb{B}_{n+1}$ and $\gamma _{i}=\frac{%
\sum_{j=1}^{n+1}\mu _{j}\bar{\lambda}_{i}^{j}}{\sum_{j=1}^{\mu }{}\mu _{j}},$
with $(\gamma _{1},\ldots ,\gamma _{m})\in \mathop{\rm conv}\mathcal{B}=%
\mathcal{B}.$ So, we see that
\begin{equation*}
E(\overline{A},\overline{b},\mathcal{B})\cap (\rho w_{1}+\epsilon )\mathbb{B}%
_{n+1}\neq \emptyset
\end{equation*}%
for all $\epsilon >0,$ which implies that $\mathop{\rm dist}(0_{n+1},E(%
\overline{A},\overline{b},\mathcal{B}))\leq \rho w_{1}+\epsilon$. Letting $%
\epsilon \rightarrow 0$, we see that $\rho \geq \frac{1}{w_{1}}%
\mathop{\rm
dist}(0_{n+1},E(\overline{A},\overline{b},\mathcal{B}))$. Hence,
\begin{equation*}
\rho (\overline{A},\overline{b})\geq \frac{1}{w_{1}}\,\mathop{\rm dist}%
(0_{n+1},E(\overline{A},\overline{b},\mathcal{B})).
\end{equation*}

%
To see the second inequality, let $\rho >0$ be such that $$%
\mathop{\rm
dist}(0_{n+1},E(\overline{A},\overline{b},\mathcal{B}))<\rho.$$ Then, there
exists $(z,s)\in E(\overline{A},\overline{b},\mathcal{B})$ such that $\Vert
(z,s)\Vert \leq \rho$. So, there exist $\widetilde{\lambda }\in \mathcal{B},$
and $\mu \geq 0$ such that $(z,s)=\sum_{i=1}^{m}\widetilde{\lambda }_{i}(%
\overline{a}_{i},-\overline{b}_{i})+(0_{n},\mu )$. Let $\epsilon >0$ be an
arbitrary positive number. Then we have
\begin{equation*}
(0_{n},-(\mu +\epsilon ))\in \sum_{i=1}^{m}\widetilde{\lambda }_{i}(%
\overline{a}_{i},-\overline{b}_{i})+(\rho +\epsilon )\mathbb{B}_{n+1},
\end{equation*}%
Dividing both sides by $\mu +\epsilon $, one has%
\begin{equation*}
\begin{array}{ll}
(0_{n},-1) & \in \left( \frac{1}{\mu +\epsilon }\right) \left( \displaystyle%
\sum_{i=1}^{m}\widetilde{\lambda }_{i}(\overline{a}_{i},-\overline{b}%
_{i})+\left( \rho +\epsilon \right) \mathbb{B}_{n+1}\right) ,%
\end{array}%
\end{equation*}%
Thus, there exists $(u,r)\in \mathbb{B}_{n+1}$ such that
\begin{equation*}
(0_{n},1)\in \left( {\textstyle {\frac{1 }{\mu +\epsilon }}}\right) \left(
\sum_{i=1}^{m}\widetilde{\lambda }_{i}(\overline{a}_{i},-\overline{b}%
_{i})+\left( \rho +\epsilon \right) (u,r)\right) .
\end{equation*}%
Let $w_{2}:=\min \{\sum_{i=1}^{m}|\lambda _{i}|:\lambda \in \mathcal{B}\}>0$
and
\begin{equation*}
(u_{i},r_{i})=\left( {\textstyle {\frac{\mathrm{sign}\widetilde{\lambda }%
_{i} }{\sum_{i=1}^{m}|\widetilde{\lambda }_{i}|}}}u,{\textstyle {\frac{%
\mathrm{sign}\widetilde{\lambda }_{i} }{\sum_{i=1}^{m}|\widetilde{\lambda }%
_{i}|}}}r\right) \in w_{2}^{-1}\mathbb{B}_{n+1}.
\end{equation*}%
Then, we have $\sum_{i=1}^{m}\widetilde{\lambda }_{i}(u_{i},r_{i})=(u,r)$,
and so,
\begin{eqnarray*}
(0_{n},1) &\in &\left( {\textstyle {\frac{1 }{\mu +\epsilon }}}\right)
\left( \sum_{i=1}^{m}\widetilde{\lambda }_{i}(\overline{a}_{i},-\overline{b}%
_{i})+\sum_{i=1}^{m}\widetilde{\lambda }_{i}\left( \rho +\epsilon \right)
(u_{i},r_{i})\right) \\
&\subseteq &\mathop{\rm cone}\left\{ \sum_{i=1}^{m}{\lambda }_{i}\left( (%
\overline{a}_{i},-\overline{b}_{i})+\left( {\textstyle {\frac{\rho +\epsilon
}{w_{2}}}}\right) \mathbb{B}_{n+1}\right) :\lambda \in \mathcal{B}\right\} .
\end{eqnarray*}%
This shows that $\frac{(\rho +\epsilon )}{w_{2}}1_{m}\notin C(\overline{A},%
\overline{b})$, and hence, $\rho (\overline{A},\overline{b})<{\displaystyle {%
\frac{(\rho +\epsilon ) }{w_{2}}}}$. Letting $\epsilon \rightarrow 0$, we
have 
$\rho \geq w_{2}\,\rho (\overline{A},\overline{b})$. 
So, $\mathop{\rm dist}(0_{n+1},E(\overline{A},\overline{b},\mathcal{B}))\geq
w_{2}\,\rho (\overline{A},\overline{b})$, and hence, the conclusion holds.
\end{proof}

\begin{example}
\label{ExampleSOC1}Consider the problem obtained by replacing $\mathbb{R}%
_{+}^{2},$ in Example \ref{example1}, by the second-order cone (SOC), $%
K_{p}^{2}=\left\{ x\in \mathbb{R}^{2}:x_{2}\geq \left\vert x_{1}\right\vert
\right\} .$ We first compute its RRF, say $\rho _{p}(\overline{A},\overline{b%
}),$ by means of Proposition \ref{Prop_Consist_Set3} part (a) .\newline
Let $\alpha >0.$ Taking the compact base $\mathcal{B}=\left[ -1,1\right]
\times \left\{ 1\right\} ,$ one has%
\begin{equation*}
\begin{array}{ll}
\left( \alpha ,\alpha \right) \in C(\overline{A},\overline{b}) &
\Longleftrightarrow \left( 0,1\right) \notin \overline{\mathop{\rm cone}%
\bigcup\limits_{\left\vert \lambda _{1}\right\vert \leq 1}\left\{ \lambda
_{1}\left( (2,0)+\alpha \mathbb{B}_{2}\right) +\left( -1,-3\right) +\alpha
\mathbb{B}_{2}\right\} } \\
& \Longleftrightarrow \left( 0,1\right) \notin \overline{\mathop{\rm cone}%
\left\{ \left( -1,-3\right) +\alpha \mathbb{B}_{2}+\displaystyle%
\bigcup\limits_{\left\vert \lambda _{1}\right\vert \leq 1}\left\{ \lambda
_{1}\left( (2,0)+\alpha \mathbb{B}_{2}\right) \right\} \right\} }. \\
& \Longleftrightarrow \left( 0,1\right) \notin \overline{\mathop{\rm cone}%
\left\{ A+\left( -1,-3\right) +\alpha \mathbb{B}_{2}\right\} },%
\end{array}%
\end{equation*}%
where $A$ $:=\displaystyle\bigcup\limits_{\left\vert \lambda _{1}\right\vert
\leq 1}\left\{ \lambda _{1}\left( (2,0)+\alpha \mathbb{B}_{2}\right)
\right\} $ is the union of $\mathop{\rm conv}\left\{ \left( (2,0)+\alpha
\mathbb{B}_{2}\right) \cup \left\{ 0,0\right\} \right\} $ with its symmetric
set w.r.t. $\left( 0,0\right) .$ $
C(\overline{A},\overline{b}),$ the points of $D$ closest to the line $%
x_{2}=0 $ are $\left( -3,-3+2\alpha \right) $ and $\left( 1,-3+2\alpha
\right) .$ So, $\left( \alpha ,\alpha \right) \in C(\overline{A},\overline{b}%
)$ if and only if $2\alpha \leq 3,$ i.e., $\rho _{p}(\overline{A},\overline{b%
})=\frac{3}{2}.$
\end{example}


Let us compare this exact value of the radius with the result of applying
Theorem \ref{th:bound}. Let $\mathcal{B}$ be the\ "natural" base $\left[ -1,1%
\right] \times \left\{ 1\right\} $ of $K_{p}^{2}$. Then,%
\begin{equation*}
\begin{array}{ll}
E_{p}(\overline{A},\overline{b},\mathcal{B}) & :=\left\{ \lambda ^{T}\left[
\overline{A}\mid -\overline{b}\right] :\lambda \in \mathcal{B}\right\}
+\{0\}\times \mathbb{R}_{+} \\
& =\mathop{\rm conv}\left\{ \lambda ^{T}\left[ \overline{A}\mid -\overline{b}%
\right] :\lambda =\left( \pm 1,1\right) \right\} +\{0\}\times \mathbb{R}_{+}
\\
& =\left( \left[ -3,1\right] \times 3\right) +\{0\}\times \mathbb{R}_{+} \\
& =\left[ -3,1\right] \times \left[ 3,+\infty \right) ,%
\end{array}%
\end{equation*}%
with $\mathop{\rm dist}\left( 0_{n+1},E(\overline{A},\overline{b},\mathcal{B}%
)\right) =3$. In this case, direct calculation shows that
\begin{equation*}
C_{1}=1/\left\{ \max \Vert \lambda _{1}u_{1}+u_{2}\Vert :\lambda _{1}\in
\lbrack -1,1],\Vert u_{1}\Vert \leq 1,\Vert u_{2}\Vert \leq 1\right\} =\frac{%
1}{2}
\end{equation*}%
and $C_{2}=1/\min \{|\lambda _{1}|+1:\lambda _{1}\in \lbrack -1,1]\}=1$.
Thus, our previous result shows that
\begin{equation*}
\rho _{p}(\overline{A},\overline{b})\in \left[ \frac{3}{2},3\right] ,
\end{equation*}%
i.e., the lower bound \thinspace $C_{1}\mathop{\rm dist}\left( 0_{n+1},E(%
\overline{A},\overline{b},\mathcal{B})\right) $\ is exact here.

The lower and the upper bounds for the RRF provided by Theorem \ref{th:bound}
involve two constants depending on the chosen base $\mathcal{B}$ of $K,$ $%
C_{1}$ and $C_{2},$ and a non-negative number, $\mathop{\rm dist}\left(
0_{n+1},E(\overline{A},\overline{b},\mathcal{B})\right) ,$ which also
depends on the nominal matrix $\left[ \overline{A},\overline{b}\right] .$ We
now provide a computable formula for the distance from the epigraphical set
to the origin.

\begin{theorem}[\textbf{A computable formula for} $\mathop{\rm dist}\left(
0_{n+1},E(\overline{A},\overline{b},\mathcal{B})\right) $]
\label{TheorComput} Let $\mathcal{B}$\ be a compact base of $K^{\ast }.$
Then,
{\small \begin{equation}
\mathop{\rm dist}\left( 0_{n+1},E(\overline{A},\overline{b},\mathcal{B}%
)\right) =\inf\limits_{(z,s,t,\lambda )\in \mathbb{R}^{n}\times \mathbb{R}%
\times \mathbb{R}\times \mathbb{R}^{m}}\left\{ t\left\vert
\begin{array}{l}
\Vert (z,s)\Vert \leq t,\medskip \\
z=\overline{A}^T\lambda,\ s\geq -\overline{b}^{T}\lambda,, \\
\lambda \in \mathcal{B}.%
\end{array}%
\right. \right\} .  \label{3.7}
\end{equation}}%
In particular, let $\mathcal{B}$ is a spectrahedron with the form $$%
\mathcal{B} =\{\lambda \in \mathbb{R}^{m}:B_{0}+\sum_{i=1}^{m}\lambda
_{i}B_{i}\succeq 0\}$$ for some $(s\times s)$ symmetric matrices $B_{i}$, $%
i=0,1,\ldots ,m$. Then one has $$\mathop{\rm dist}\left( 0_{n+1},E(\overline{A},%
\overline{b},\mathcal{B})\right) =\sqrt{f^{\ast }},$$ where $f^{\ast }$ is the
optimal value of the following semi-definite program:
\begin{equation}
\inf\limits_{(z,s,t,\lambda )\in \mathbb{R}^{n}\times \mathbb{R}\times
\mathbb{R}\times \mathbb{R}^{m}}\left\{ t\left\vert
\begin{array}{l}
\left[
\begin{array}{ccc}
tI_{n} & 0_{n} & z \\
0_{n}^{T} & t & s \\
z^{T} & s & 1%
\end{array}%
\right] \succeq 0\medskip \\
z=\overline{A}^T\lambda,\ s\geq -\overline{b}^{T}\lambda, \\
\displaystyle B_{0}+\sum_{i=1}^{m}\lambda _{i}B_{i}\succeq 0.%
\end{array}%
\right. \right\} .  \label{3.70}
\end{equation}
\end{theorem}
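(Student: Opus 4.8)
The plan is to proceed in three stages. First, I would describe a generic element of $E(\overline{A},\overline{b},\mathcal{B})$ and rewrite the distance to the origin as a constrained infimum of a Euclidean norm. Second, I would introduce an epigraphical variable $t$ to obtain the conic reformulation in \eqref{3.7}. Third, when $\mathcal{B}$ is a spectrahedron, I would replace the norm constraint by a linear matrix inequality through a Schur complement and thereby arrive at the semidefinite program \eqref{3.70}.

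For the first stage, by Definition \ref{Def_epi} a point belongs to $E(\overline{A},\overline{b},\mathcal{B})$ if and only if it can be written as $(\overline{A}^{T}\lambda,\,-\overline{b}^{T}\lambda+\mu)$ with $\lambda\in\mathcal{B}$ and $\mu\geq 0$. Hence
\[
\mathop{\rm dist}\left(0_{n+1},E(\overline{A},\overline{b},\mathcal{B})\right)=\inf\left\{\|(\overline{A}^{T}\lambda,\,-\overline{b}^{T}\lambda+\mu)\|:\lambda\in\mathcal{B},\ \mu\geq 0\right\}.
\]
Putting $z=\overline{A}^{T}\lambda$ and $s=-\overline{b}^{T}\lambda+\mu$, the existence of an admissible $\mu\geq 0$ with $s=-\overline{b}^{T}\lambda+\mu$ is equivalent to the single inequality $s\geq -\overline{b}^{T}\lambda$, so the right-hand side equals $\inf\{\|(z,s)\|:z=\overline{A}^{T}\lambda,\ s\geq-\overline{b}^{T}\lambda,\ \lambda\in\mathcal{B}\}$. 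Introducing $t$, minimizing $\|(z,s)\|$ coincides with minimizing $t$ under $\|(z,s)\|\leq t$, since any feasible pair $(z,s)$ yields a feasible triple with $t=\|(z,s)\|$ and any feasible triple satisfies $t\geq\|(z,s)\|$. This is precisely \eqref{3.7}. I would also note in passing that, since $\mathcal{B}$ is compact, $\{\lambda^{T}[\overline{A}\mid-\overline{b}]:\lambda\in\mathcal{B}\}$ is compact and $E(\overline{A},\overline{b},\mathcal{B})$ is closed and convex, so the infimum is attained; this is not needed for the identity but shows that the programs possess optimal solutions.

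For the spectrahedral case I would substitute $\mathcal{B}=\{\lambda:B_{0}+\sum_{i=1}^{m}\lambda_{i}B_{i}\succeq 0\}$ into \eqref{3.7} and convert the second-order constraint $\|(z,s)\|\leq t$ into an LMI. Writing $w:=(z,s)\in\mathbb{R}^{n+1}$, the block matrix appearing in \eqref{3.70} is the $(n+2)\times(n+2)$ symmetric matrix with diagonal blocks $tI_{n+1}$ and $1$ and off-diagonal block $w$. Performing the Schur complement with respect to the strictly positive scalar block $1$ (which sidesteps any invertibility issue at $t=0$) shows that this matrix is positive semidefinite if and only if $tI_{n+1}-ww^{T}\succeq 0$, that is, if and only if $t\geq\lambda_{\max}(ww^{T})=\|w\|^{2}=\|(z,s)\|^{2}$. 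Consequently the optimal value $f^{\ast}$ of \eqref{3.70} is $\inf\{\|(z,s)\|^{2}:z=\overline{A}^{T}\lambda,\ s\geq-\overline{b}^{T}\lambda,\ \lambda\in\mathcal{B}\}$, which by the first stage equals $\bigl(\mathop{\rm dist}(0_{n+1},E(\overline{A},\overline{b},\mathcal{B}))\bigr)^{2}$; since $t\geq 0$ throughout, $f^{\ast}\geq 0$ and $\mathop{\rm dist}(0_{n+1},E(\overline{A},\overline{b},\mathcal{B}))=\sqrt{f^{\ast}}$.

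I do not expect a genuine obstacle in this argument; the only delicate points are the Schur-complement equivalence — best recorded as: for symmetric $P$, the matrix with diagonal blocks $P$, $1$ and off-diagonal block $w$ is positive semidefinite iff $P\succeq ww^{T}$, applied with $P=tI_{n+1}$ — together with the two harmless objective reformulations, namely passing from $\min\|\cdot\|$ to its epigraph form and observing that $\inf\{t:t\geq\|w\|^{2},\,\ldots\}=\inf\{\|w\|^{2}:\ldots\}$. Care is needed only to handle the degenerate case $t=0$ in the Schur-complement step, which is why I would pivot on the scalar block rather than on $tI_{n+1}$.
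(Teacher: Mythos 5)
Your proof is correct and follows essentially the same route as the paper: rewrite the distance as a constrained infimum over the explicit parametrization of $E(\overline{A},\overline{b},\mathcal{B})$, pass to the epigraph form to get \eqref{3.7}, and convert $t\geq\Vert (z,s)\Vert^{2}$ into the LMI of \eqref{3.70} via the Schur complement. Your extra care in pivoting on the scalar block $1$ (so that $t=0$ causes no trouble) is a welcome refinement of the step the paper delegates to a cited lemma.
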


\begin{proof}
By definition,
\begin{eqnarray}
\ \ \ \ \ \ \ \mathop{\rm dist}\left( 0_{n+1},E(\overline{A},\overline{b},%
\mathcal{B})\right) &=&\inf\limits_{(z,s)\in \mathbb{R}^{n}\times \mathbb{R}%
}\{\Vert (z,s)\Vert :(z,s)\in E(\overline{A},\overline{b},\mathcal{B}%
)\}\medskip \\
&=&\inf\limits_{(z,s,t)\in \mathbb{R}^{n}\times \mathbb{R}\times \mathbb{R}%
}\{t:t\geq \Vert (z,s)\Vert ,(z,s)\in E(\overline{A},\overline{b},\mathcal{B}%
)\}.  \notag
\end{eqnarray}%
From the definition of $E(\overline{A},\overline{b},\mathcal{B})$, one has
\begin{equation}
(z,s)\in E(\overline{A},\overline{b},\mathcal{B})\Leftrightarrow \exists
\,w\geq 0,\,\lambda \in \mathcal{B}:\text{ }(z,s)=\sum_{i=1}^{m}\lambda _{i}(%
\overline{a}_{i},-\overline{b}_{i})+(0_{n},w).  \label{3.9}
\end{equation}%
So, \eqref{3.7} holds.

To see the second assertion, we note that
\begin{eqnarray}
& & \mathop{\rm dist}\left( 0_{n+1},E(\overline{A},\overline{b},\mathcal{B}%
)\right) ^{2} \nonumber \\ &=&\inf\limits_{(z,s)\in \mathbb{R}^{n}\times \mathbb{R}%
}\{\Vert (z,s)\Vert ^{2}:(z,s)\in E(\overline{A},\overline{b},\mathcal{B}%
)\}\medskip  \notag \\
&=&\inf\limits_{(z,s,t)\in \mathbb{R}^{n}\times \mathbb{R}\times \mathbb{R}%
}\{t:t\geq \Vert (z,s)\Vert ^{2},(z,s)\in E(\overline{A},\overline{b},%
\mathcal{B})\},  \label{3.8}
\end{eqnarray}%
where $t\geq \Vert (z,s)\Vert ^{2}$ can be replaced by
\begin{equation*}
\left[
\begin{array}{ccc}
tI_{n} & 0_{n} & z \\
0_{n}^{T} & t & s \\
z^{T} & s & 1%
\end{array}%
\right] \succeq 0
\end{equation*}%
thanks to the Schur complement (see, e.g., \cite[Lemma 4.2.1]{BN01}). So,
one gets the desired conclusion $\mathop{\rm dist}\left( 0_{n+1},E(\overline{%
A},\overline{b},\mathcal{B})\right) =\sqrt{f^{\ast }}$ from (\ref{3.8}) and (%
\ref{3.9}).
\end{proof}

As a corollary, we obtain the radius formula of robust feasibility for an
uncertain linear program which was established in \cite{GJLP}. Here, we
further show that this formula leads to efficient computation of tight upper
bounds for the RRF via second order cone programs.

\begin{corollary}[\textbf{Computable exact formula for} $\protect\rho (%
\overline{A},\overline{b})$ \textbf{of uncertain LPs}]
\label{cor:18} Let $K=\mathbb{R}_{+}^{m}$. Then, the RRF satisfies
\begin{equation*}
\rho (\overline{A},\overline{b})=\mathop{\rm dist}(0_{n+1},E(\overline{A},%
\overline{b},\mathcal{B}))=f_{LP}^{\ast },
\end{equation*}%
where $f_{LP}^{\ast }$ is the optimal value of the following second order
programming problem:
\begin{equation}
\inf\limits_{(z,s,t,\lambda )\in \mathbb{R}^{n}\times \mathbb{R}\times
\mathbb{R}\times \mathbb{R}^{m}}\left\{ t\ \left\vert
\begin{array}{l}
\Vert (z,s)\Vert \leq t,\medskip \\
z=\overline{A}^{T}\lambda ,\ s\geq -\overline{b}^{T}\lambda , \\
\lambda \in \mathbb{R}_{+}^{m},\ \sum_{i=1}^{m}\lambda _{i}=1%
\end{array}%
\right. \right\} .  \label{eq:996}
\end{equation}
\end{corollary}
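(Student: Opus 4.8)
The plan is to specialise Theorems~\ref{th:bound} and~\ref{TheorComput} to the case $K=\mathbb{R}_{+}^{m}$, whose dual cone is again $\mathbb{R}_{+}^{m}$, and to use as compact base of $K^{\ast}$ the unit simplex $\mathcal{B}=\Delta_{m}=\{\lambda\in\mathbb{R}_{+}^{m}:1_{m}^{T}\lambda=1\}$; this set is compact, convex, does not contain $0_{m}$, and satisfies $\mathbb{R}_{+}^{m}=\mathbb{R}_{+}\Delta_{m}$, so it is a legitimate choice of base, and it is the one underlying the displayed program~\eqref{eq:996}.

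First I would evaluate the two constants $C_{1}(\Delta_{m})$ and $C_{2}(\Delta_{m})$ of Theorem~\ref{th:bound}. For $\lambda\in\Delta_{m}$ and $\|u_{i}\|\le 1$, the triangle inequality gives $\|\sum_{i=1}^{m}\lambda_{i}u_{i}\|\le\sum_{i=1}^{m}\lambda_{i}\|u_{i}\|\le\sum_{i=1}^{m}\lambda_{i}=1$, with equality attained by taking $u_{1}=\cdots=u_{m}$ equal to a common unit vector; hence $C_{1}(\Delta_{m})=1$. Since $\sum_{i=1}^{m}|\lambda_{i}|=\sum_{i=1}^{m}\lambda_{i}=1$ for every $\lambda\in\Delta_{m}$, we also get $C_{2}(\Delta_{m})=1$. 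Substituting $C_{1}=C_{2}=1$ into Theorem~\ref{th:bound} collapses the two-sided estimate into the exact identity $\rho(\overline{A},\overline{b})=\mathop{\rm dist}(0_{n+1},E(\overline{A},\overline{b},\Delta_{m}))$.

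It then remains to apply Theorem~\ref{TheorComput} with $\mathcal{B}=\Delta_{m}$: the general formula~\eqref{3.7} becomes the infimum of $t$ over $(z,s,t,\lambda)\in\mathbb{R}^{n}\times\mathbb{R}\times\mathbb{R}\times\mathbb{R}^{m}$ subject to $\|(z,s)\|\le t$, $z=\overline{A}^{T}\lambda$, $s\ge-\overline{b}^{T}\lambda$, and $\lambda\in\mathbb{R}_{+}^{m}$ with $\sum_{i=1}^{m}\lambda_{i}=1$, which is precisely the program~\eqref{eq:996} defining $f_{LP}^{\ast}$. This program is a second-order cone program, since its only non-affine constraint, $\|(z,s)\|\le t$, is a second-order cone constraint while all the remaining relations are linear in $(z,s,t,\lambda)$. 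Chaining the two identities yields $\rho(\overline{A},\overline{b})=\mathop{\rm dist}(0_{n+1},E(\overline{A},\overline{b},\Delta_{m}))=f_{LP}^{\ast}$, as claimed.

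I do not expect a serious obstacle here; the argument is essentially a bookkeeping specialisation of the earlier results. The one point deserving care is the computation $C_{1}(\Delta_{m})=C_{2}(\Delta_{m})=1$: it is exactly the coincidence $C_{1}=C_{2}$, special to the nonnegative orthant together with the simplex base, that upgrades the general lower/upper bounds of Theorem~\ref{th:bound} to an exact formula (a different base of $\mathbb{R}_{+}^{m}$ would in general leave a gap between the bounds, even though the product $C_{i}\,\mathop{\rm dist}$ is unaffected by rescaling a fixed base).
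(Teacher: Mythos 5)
Your proof is correct and follows essentially the same route as the paper: specialise to the simplex base $\Delta_m$ of $K^{\ast}=\mathbb{R}_{+}^{m}$ and invoke Theorems~\ref{th:bound} and~\ref{TheorComput}. In fact your write-up is slightly more complete than the paper's, which simply cites formula~\eqref{3.7} with $\mathcal{B}=\Delta$ and leaves implicit the verification $C_{1}(\Delta_m)=C_{2}(\Delta_m)=1$ that turns the two-sided bound into the exact identity.
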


\begin{proof}
Let $K=\mathbb{R}_{+}^{m}$. Then, $K^{\ast }=K=\mathbb{R}_{+}^{m}$. Then,
the simplex $\Delta =\{\lambda :\lambda \in \mathbb{R}_{+}^{m},%
\sum_{i=1}^{m}\lambda _{i}=1\}$ is a natural compact base for $\mathbb{R}%
_{+}^{m}$. 
Thus, the conclusion follows by letting $\mathcal{B}=\Delta$ in \eqref{3.7}
in Theorem 
\ref{TheorComput}.
\end{proof}

We now provide a simple example illustrating Corollary \ref{cor:18}.

\begin{example}
Consider the same problem examined in Example \ref{example1}. Then, the
second order problem in (\ref{eq:996}) becomes here
\begin{equation*}
\inf_{(z,s,t,\lambda )\in \mathbb{R}\times \mathbb{R}\times \mathbb{R}\times
\mathbb{R}^{2}}\left\{ t\left\vert
\begin{array}{l}
\Vert (z,s)\Vert \leq t, \\
z=2\lambda _{1}-\lambda _{2},\ s\geq 3\lambda _{2}, \\
\lambda _{1}+\lambda _{2}=1,\lambda _{1}\geq 0,\lambda _{2}\geq 0,%
\end{array}%
\right. \right\} ,
\end{equation*}%
whose optimal set is $\left\{ 1\right\} \times \left\{ 1\right\} \times
\left\{ \sqrt{2}\right\} \times \left\{ \left( \frac{2}{3},\frac{1}{3}%
\right) \right\} .$ Thus, we get again $\rho (\overline{A},\overline{b})=%
\sqrt{2}$ which coincides with the computation in Example \ref{Example1}.
\end{example}

\noindent

\section{Bounds for Radius of Robust Feasibility of SDPs \& SOCPs}

We now consider uncertain linear semi-definite programming problems and
provide computable bounds for the RRF. To do this, recall that $S_{+}^{q}$
is the cone which consists of all $(q\times q)$ positive semi-definite
matrices. Denote the set of all $(q\times q)$ symmetric matrices by $S^{q}$
and let $\mathrm{Tr}(M)$ be the trace of a matrix $M\in S^{q}$. As $S^{q}$
and $\mathbb{R}^{q(q+1)/2}$ have the same dimensions, there exists an
invertible linear map $L:S^{q}\rightarrow \mathbb{R}^{q(q+1)/2}$ such that
\begin{equation*}
L(M_{1})^{T}L(M_{2})=\mathrm{Tr}(M_{1}M_{2})\mbox{ for all }M_{1},M_{2}\in
S^{q}.
\end{equation*}%
We now identify the space of $(q\times q)$ symmetric matrices $S^{q},$
equipped with the trace inner product, as $\mathbb{R}^{q(q+1)/2}$ with the
usual Euclidean inner product by associating each symmetric matrix $M$ to $%
L(M)$.

\begin{corollary}[\textbf{Numerically tractable bounds for} $\protect\rho (%
\overline{A},\overline{b})$ \textbf{of uncertain SDPs}]
\label{Corol_SDP}Identify $S^{q}$ with $\mathbb{R}^{q(q+1)/2}$ via the
mapping $L$ given as above and let $K$ be the positive semi-definite cone $%
S_{+}^{q}$. Let $\mathcal{B}=\{\Lambda \in S^{q}:\Lambda \in S_{+}^{q},%
\mathrm{Tr}(\Lambda )=1\}$ be the natural compact base for $K$. Then, the
RRF satisfies
\begin{equation*}
\frac{2}{q(q+1)}\sqrt{f_{SDP}^{\ast }}\leq \rho (\overline{A},\overline{b}%
)\leq \sqrt{q}\,\sqrt{f_{SDP}^{\ast }} \ ,
\end{equation*}%
where $f_{SDP}^{\ast }$ is the optimal value of the following semi-definite
program:
\begin{equation*}
\inf\limits_{\substack{ (z,s,t,\lambda )\in \mathbb{R}^{n}\times \mathbb{R}%
\times \mathbb{R}\times \mathbb{R}^{m}  \\ \Lambda \in S^{q}}}\left\{
t\left\vert
\begin{array}{l}
\left[
\begin{array}{ccc}
tI_{n} & 0_{n} & z \\
0_{n}^{T} & t & s \\
z^{T} & s & 1%
\end{array}%
\right] \succeq 0,\medskip \\
z=\overline{A}^{T}\lambda ,\ s\geq -\overline{b}^{T}\lambda , \\
\lambda =L(\Lambda ) \\
\Lambda \in S_{+}^{q},\ \mathrm{Tr}(\Lambda )=1.%
\end{array}%
\right. \right\} .
\end{equation*}
\end{corollary}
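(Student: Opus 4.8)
The plan is to specialize the sandwich bound of Theorem~\ref{th:bound} to $K=S_+^q$, evaluating $\mathop{\rm dist}\bigl(0_{n+1},E(\overline{A},\overline{b},\mathcal{B})\bigr)$ through Theorem~\ref{TheorComput} and then estimating the two constants $C_1=C_1(\mathcal{B})$ and $C_2=C_2(\mathcal{B})$. First I would check that $\mathcal{B}$ is a legitimate compact base: the positive semi-definite cone is self-dual for the trace inner product, so under the isometric identification of $S^q$ with $\mathbb{R}^m$, $m=q(q+1)/2$, induced by $L$ one has $K^{\ast}=K=S_+^q$; and $\mathcal{B}=\{\Lambda\in S_+^q:\mathrm{Tr}(\Lambda)=1\}$ is convex, closed, does not contain $0$, satisfies $\mathbb{R}_+\mathcal{B}=S_+^q$, and is bounded since $\|L(\Lambda)\|^2=\mathrm{Tr}(\Lambda^2)\le(\mathrm{Tr}\,\Lambda)^2=1$ whenever $\Lambda\succeq0$. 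Next, setting $E_i:=L^{-1}(e_i)\in S^q$ and $\tau_i:=\mathrm{Tr}(E_i)$, the membership $\lambda\in\mathcal{B}$ (with $\Lambda=\sum_i\lambda_iE_i$) means $\sum_i\lambda_iE_i\succeq0$ together with $\sum_i\lambda_i\tau_i=1$; splitting the equality into $\sum_i\lambda_i\tau_i-1\ge0$ and $1-\sum_i\lambda_i\tau_i\ge0$ and stacking everything into one block-diagonal matrix shows that $\mathcal{B}$ has the spectrahedral form required in Theorem~\ref{TheorComput}. Hence $\mathop{\rm dist}\bigl(0_{n+1},E(\overline{A},\overline{b},\mathcal{B})\bigr)=\sqrt{f^{\ast}}$ with $f^{\ast}$ the optimal value of \eqref{3.70} for this $\mathcal{B}$, and one observes that \eqref{3.70} specialized to the present description of $\mathcal{B}$ (with $\lambda=L(\Lambda)$ as the spectrahedral variable) is exactly the SDP defining $f_{SDP}^{\ast}$, so $f^{\ast}=f_{SDP}^{\ast}$.

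Then I would compute $C_1$ and $C_2$. For any $\lambda$, the maximum of $\|\sum_{i=1}^m\lambda_iu_i\|$ over $\|u_i\|\le1$ equals $\sum_{i=1}^m|\lambda_i|$ (take all $u_i$ equal to a fixed unit vector, weighted by $\mathrm{sign}\,\lambda_i$), so $C_1=1/\max_{\lambda\in\mathcal{B}}\|\lambda\|_1$ and $C_2=1/\min_{\lambda\in\mathcal{B}}\|\lambda\|_1$, where $\|\cdot\|_1$ denotes the $\ell_1$-norm on $\mathbb{R}^m$. Since $L$ preserves the inner product, $\|L(\Lambda)\|=\sqrt{\mathrm{Tr}(\Lambda^2)}$, and for $\Lambda\in\mathcal{B}$ with eigenvalues $\mu_1,\dots,\mu_q\ge0$ summing to $1$ one has $\mathrm{Tr}(\Lambda^2)=\sum_i\mu_i^2\in[1/q,1]$, the extremes being attained at $\mu=q^{-1}1_q$ and at a unit coordinate vector. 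Combining this with the norm inequalities $\|v\|\le\|v\|_1\le m\|v\|$ on $\mathbb{R}^m$ gives, for $\lambda=L(\Lambda)$ with $\Lambda\in\mathcal{B}$, the bounds $\|\lambda\|_1\ge\|L(\Lambda)\|\ge q^{-1/2}$ and $\|\lambda\|_1\le m\|L(\Lambda)\|\le m=q(q+1)/2$; hence $C_1\ge 2/(q(q+1))$ and $C_2\le\sqrt{q}$.

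Plugging these into Theorem~\ref{th:bound}, together with $\mathop{\rm dist}\bigl(0_{n+1},E(\overline{A},\overline{b},\mathcal{B})\bigr)=\sqrt{f_{SDP}^{\ast}}$, yields
$$\frac{2}{q(q+1)}\sqrt{f_{SDP}^{\ast}}\le C_1\,\mathop{\rm dist}\bigl(0_{n+1},E(\overline{A},\overline{b},\mathcal{B})\bigr)\le\rho(\overline{A},\overline{b})\le C_2\,\mathop{\rm dist}\bigl(0_{n+1},E(\overline{A},\overline{b},\mathcal{B})\bigr)\le\sqrt{q}\,\sqrt{f_{SDP}^{\ast}},$$
which is the assertion. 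The genuinely routine parts are the self-duality of $S_+^q$ and the Schur-complement reformulation inherited from Theorem~\ref{TheorComput}; the step needing the most care is the estimate of $C_1$, since it converts the coordinate-dependent quantity $\max_{\lambda\in\mathcal{B}}\|\lambda\|_1$ into the intrinsic trace norm through the (lossy) equivalence $\|\cdot\|_1\le m\|\cdot\|$ — this is exactly where the factor $q(q+1)/2$ enters — and because one must verify that the chosen block-diagonal presentation of $\mathcal{B}$ genuinely fits the spectrahedral hypothesis of Theorem~\ref{TheorComput} so that \eqref{3.70} applies verbatim.
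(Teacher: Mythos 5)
Your proposal is correct and follows essentially the same route as the paper: invoke Theorem \ref{th:bound} together with the SDP reformulation of $\mathop{\rm dist}\left(0_{n+1},E(\overline{A},\overline{b},\mathcal{B})\right)$ from Theorem \ref{TheorComput}, then bound $C_{1}$ and $C_{2}$ via $\Vert L(\Lambda)\Vert^{2}=\mathrm{Tr}(\Lambda^{2})\in[1/q,1]$ for $\Lambda\in\mathcal{B}$ and the equivalence between the $\ell_{1}$- and Euclidean norms on $\mathbb{R}^{m}$ with $m=q(q+1)/2$. The only (harmless) differences are that you make explicit the exact identity $C_{1}=1/\max_{\lambda\in\mathcal{B}}\Vert\lambda\Vert_{1}$ and the spectrahedral presentation of $\mathcal{B}$, both of which the paper uses implicitly.
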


\begin{proof}
Let $K$ be the positive semi-definite cone $S_{+}^{q}$. Then, $K^{\ast
}=K=S_{+}^{q}$. Let $\mathcal{B}=\{\Lambda \in S^{q}:\Lambda \in S_{+}^{q},%
\mathrm{Tr}(\Lambda )=1\}$ be the natural compact base for $S_{q}^{+}$.
Then, Theorem \ref{th:bound} and Theorem \ref{TheorComput} imply that
\begin{equation*}
C_{1}\sqrt{f_{SDP}^{\ast }}\leq \rho (\overline{A},\overline{b})\leq C_{2}%
\sqrt{f_{SDP}^{\ast }}\, ,
\end{equation*}%
where $m=\frac{q(q+1)}{2}$,
\begin{equation*}
C_{1}=1/\max \left\{ \Vert \sum_{i=1}^{m}\lambda _{i}u_{i}\Vert :\lambda
=L(\Lambda ),\,\Lambda \in \mathcal{B},\Vert u_{i}\Vert \leq 1\right\}
\label{eq:000}
\end{equation*}%
and
\begin{equation*}
C_{2}=1/\min \left\{ \sum_{i=1}^{m}|\lambda _{i}|:\lambda =L(\Lambda
),\,\Lambda \in \mathcal{B}\right\} .
\end{equation*}%
To see the conclusion, it suffices to show that $C_{1}\geq \frac{2}{q(q+1)}$
and $C_{2}\leq \sqrt{q}$. To see this, from the definition of $L$, we have $%
\Vert \lambda \Vert ^{2}=\Vert L(\Lambda )\Vert ^{2}=\mathrm{Tr}(\Lambda
^{2})$. Let $\Lambda =U\Sigma U^{T}$ be the singular value decomposition of $%
\Lambda \in S^{q}$ where $U$ is an orthonormal matrix and $\Sigma $ is a
diagonal matrix whose diagonal elements are the eigenvalues of $\Lambda $.
Then, $\Lambda ^{2}=U\Sigma ^{2}U^{T}$. It follows from the
arithmetic-quadratic inequality that for all $\Lambda \in \mathcal{B}$,
\begin{equation*}
\frac{1}{q}=\frac{1}{q}\mathrm{Tr}(\Lambda )^{2}\leq \mathrm{Tr}(\Lambda
^{2})\leq \lbrack \mathrm{Tr}(\Lambda )]^{2}=1.
\end{equation*}%
This implies that $\frac{1}{\sqrt{q}}\leq \Vert \lambda \Vert \leq 1$ for
all $\Lambda \in \mathcal{B}$. Thus, one has
\begin{equation*}
C_{1}\geq 1/\max \left\{ \sum_{i=1}^{m}|\lambda _{i}|:\lambda =L(\Lambda
),\,\Lambda \in \mathcal{B}\right\} \geq \frac{1}{m}=\frac{2}{q(q+1)}.
\end{equation*}%
%
%
%
%
%
%
%
%
%
%
%
%
%
%
%
%
%
%
%
%
%
%
%
%
%
%
%
%
%
%
%
%
%
%
%
%
%
%
%
Moreover, as $\sum_{i=1}^{m}|\lambda _{i}|\geq \Vert \lambda \Vert \geq
\frac{1}{\sqrt{q}}$, one has
\begin{equation*}
C_{2}=1/\min \left\{ \sum_{i=1}^{m}|\lambda _{i}|:\lambda =L(\Lambda
),\,\Lambda \in \mathcal{B}\right\} \leq \sqrt{q}.
\end{equation*}%
So, the conclusion follows.
\end{proof}


\begin{example}
Consider the following semi-definite program (SDP)
\begin{equation*}
(SDP)\ \ \ \min \left\{ -x_{1}:\left[
\begin{array}{cc}
-1 & x_{2} \\
x_{2} & -1+x_{1}%
\end{array}%
\right] \in -S_{+}^{2}\right\} .
\end{equation*}%
We now investigate the RRF for this SDP.

Let $L:S^{2}\rightarrow \mathbb{R}^{3}$ be defined as $L\left( \left[
\begin{array}{cc}
z_{1} & z_{2} \\
z_{2} & z_{3}%
\end{array}%
\right] \right) =(z_{1},\sqrt{2}z_{2},z_{3})^{T}$. Clearly, $L$ is a
one-to-one mapping with $L(M_{1})^{T}L(M_{2})=\mathrm{Tr}(M_{1}M_{2})$. So,
the feasible set of $(SDP)$ can be written as $\{x\in \mathbb{R}^{2}:%
\overline{A}x+\overline{b}\in -L\left( S_{+}^{2}\right) \},$ with $\overline{%
A}=\left[
\begin{array}{cc}
0 & 0 \\
0 & \sqrt{2} \\
1 & 0%
\end{array}%
\right] ,$ $\overline{b}=\left(
\begin{array}{c}
-1 \\
0 \\
-1%
\end{array}%
\right) $ and $L\left( S_{+}^{2}\right) =\left\{ y\in \mathbb{R}%
^{3}:y_{2}^{2}\leq 2y_{1}y_{3},y_{1}\geq 0,y_{3}\geq 0\right\} .$ Direct
computation shows that $f_{\mathrm{SDP}}^{\ast }=1$ in this case with an
optimal solution $(z,s,t,\lambda )=(0_{2},1,1,(1,0,0)^{T})$. Then, the
preceding corollary implies that the RRF satisfies $\frac{1}{3}\leq \rho (%
\overline{A},\overline{b})\leq \sqrt{2}$.

On the other hand, it can be directly verified that the true RRF for this
example satisfies $\rho (\overline{A},\overline{b})\in \lbrack \frac{1}{2}%
,1] $. Indeed, let $\Delta b(\epsilon )=(1+\epsilon ,0,0)^{T}$ for any $%
\epsilon >0$. note that $\{x:\overline{A}x+\overline{b}+\Delta b(\epsilon
)\in -S_{+}^{2}\}=\emptyset $. This shows that $\rho (\overline{A},\overline{%
b})\leq 1+\epsilon $ for all $\epsilon >0$, and hence $\rho (\overline{A},%
\overline{b})\leq 1$. Moreover, let $\overline{A}=\left[ \overline{a}%
_{1}\mid \overline{a}_{2}\mid \overline{a}_{3}\right] ^{T}$ and $A=\left[ {a}%
_{1}\mid {a}_{2}\mid {a}_{3}\right] ^{T}$. Then, for all $(a_{i},b_{i})\in (%
\overline{a}_{i},\overline{b}_{i})+\frac{1}{2}\mathbb{B}_{3}$, one sees that
$b_{1}\in \lbrack -\frac{3}{2},-\frac{1}{2}]$, $b_{2}\in \lbrack -\frac{1}{2}%
,\frac{1}{2}]$ and $b_{2}\in \lbrack -\frac{3}{2},-\frac{1}{2}]$. So, $%
0_{2}\in \{x:Ax+b\in -L\left( S_{+}^{2}\right) \}$. This shows that $\rho (%
\overline{A},\overline{b})\geq \frac{1}{2}$.
\end{example}

Next, we see that, in the case for an uncertain second-order cone program (that
is, $K=K_{p}^{m}:=\left\{ x\in \mathbb{R}^{m}:x_{m}\geq \left\Vert \left(
x_{1},...,x_{m-1}\right) \right\Vert \right\} $), the lower and upper bound
of the RRF can be computed by solving a second-order cone programming
problem.

\begin{corollary}[\textbf{Numerically tractable bounds for }$\protect\rho (%
\overline{A},\overline{b})$ \textbf{of uncertain SOCPs}]
\label{Corol_SOCP}Let $K$ be the second-order cone $K_{p}^{m}$ in $\mathbb{R}%
^{m}$, and let $\mathcal{B}=\{\lambda \in \mathbb{R}^{m}:\Vert (\lambda
_{1},\ldots ,\lambda _{m-1})\Vert \leq 1\mbox{ and }\lambda _{m}=1\}$ be the
natural compact base for $K_{p}^{m}$. Then, the RRF satisfies
\begin{equation*}
\frac{1}{\sqrt{m-1}+1}f_{SOC}^{\ast }\leq \rho (\overline{A},\overline{b}%
)\leq f_{SOC}^{\ast },
\end{equation*}%
where $f_{SOC}^{\ast }$ is the optimal value of the following second order
cone program:
\begin{equation*}
\inf\limits_{(z,s,t,\lambda )\in \mathbb{R}^{n}\times \mathbb{R}\times
\mathbb{R}\times \mathbb{R}^{m}}\left\{ t\left\vert
\begin{array}{l}
\Vert (z,s)\Vert \leq t,\medskip \\
z=\overline{A}^{T}\lambda ,\ s\geq -\overline{b}^{T}\lambda , \\
\Vert (\lambda _{1},\ldots ,\lambda _{m-1})\Vert \leq 1,\ \lambda _{m}=1.%
\end{array}%
\right. \right\} .
\end{equation*}
\end{corollary}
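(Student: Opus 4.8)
The plan is to specialise Theorem~\ref{th:bound} and Theorem~\ref{TheorComput} to $K=K_p^m$ and then to estimate the two constants $C_1,C_2$ for the proposed base $\mathcal{B}$. First I would recall that the second-order cone is self-dual, so $K^\ast=K_p^m$, and check that $\mathcal{B}=\{\lambda\in\mathbb{R}^m:\|(\lambda_1,\ldots,\lambda_{m-1})\|\le 1,\ \lambda_m=1\}$ is indeed a compact base of $K^\ast$: it is compact and convex, it does not contain $0_m$ (its elements have $\lambda_m=1$), and $\mathbb{R}_+\mathcal{B}=K_p^m$ since every nonzero element of $K_p^m$ is a positive multiple of its normalisation to last coordinate one.

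Next I would plug this $\mathcal{B}$ into formula \eqref{3.7} of Theorem~\ref{TheorComput}. The membership $\lambda\in\mathcal{B}$ becomes the pair of constraints $\|(\lambda_1,\ldots,\lambda_{m-1})\|\le 1$ and $\lambda_m=1$; together with $\|(z,s)\|\le t$, $z=\overline{A}^T\lambda$ and $s\ge -\overline{b}^T\lambda$ this is exactly the program defining $f_{SOC}^\ast$, and it is a genuine second-order cone program (two SOC constraints, the remaining constraints affine). Hence $\mathop{\rm dist}(0_{n+1},E(\overline{A},\overline{b},\mathcal{B}))=f_{SOC}^\ast$, which is finite because $E(\overline{A},\overline{b},\mathcal{B})$ is nonempty. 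Theorem~\ref{th:bound} then gives
\begin{equation*}
C_1\,f_{SOC}^\ast\le\rho(\overline{A},\overline{b})\le C_2\,f_{SOC}^\ast,
\end{equation*}
so it remains to evaluate/estimate $C_1=1/\max\{\|\sum_{i=1}^m\lambda_i u_i\|:\lambda\in\mathcal{B},\ \|u_i\|\le 1\}$ and $C_2=1/\min\{\sum_{i=1}^m|\lambda_i|:\lambda\in\mathcal{B}\}$.

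For $C_2$: every $\lambda\in\mathcal{B}$ has $\lambda_m=1$, so $\sum_{i=1}^m|\lambda_i|\ge 1$, with equality at $\lambda=(0_{m-1},1)\in\mathcal{B}$; hence $C_2=1$, which yields the upper bound $\rho(\overline{A},\overline{b})\le f_{SOC}^\ast$. For $C_1$: by the triangle inequality and Cauchy--Schwarz, for $\lambda\in\mathcal{B}$ and $\|u_i\|\le 1$,
\begin{equation*}
\Big\|\sum_{i=1}^m\lambda_i u_i\Big\|\le\sum_{i=1}^m|\lambda_i|=\sum_{i=1}^{m-1}|\lambda_i|+1\le\sqrt{m-1}\,\|(\lambda_1,\ldots,\lambda_{m-1})\|+1\le\sqrt{m-1}+1,
\end{equation*}
so $C_1\ge 1/(\sqrt{m-1}+1)$, giving the lower bound. (In fact equality holds throughout by taking all $u_i$ equal to a common unit vector and $\lambda_i=1/\sqrt{m-1}$ for $i\le m-1$, so $C_1=1/(\sqrt{m-1}+1)$, but only the inequality is needed.)

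I do not expect a serious obstacle: the argument is a direct specialisation of the two theorems, and the only computational content is the elementary estimates of $C_1$ and $C_2$. The one step to be careful about is the Cauchy--Schwarz bound $\sum_{i=1}^{m-1}|\lambda_i|\le\sqrt{m-1}\,\|(\lambda_1,\ldots,\lambda_{m-1})\|$, which is precisely what produces the dimension-dependent factor $\sqrt{m-1}+1$ in the lower bound.
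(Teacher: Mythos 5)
Your proposal is correct and follows essentially the same route as the paper: specialise Theorem~\ref{th:bound} together with formula \eqref{3.7} of Theorem~\ref{TheorComput} to the self-dual cone $K_p^m$ with the given base, then compute $C_2=1$ directly and bound $C_1$ via the triangle inequality and the estimate $\sum_{i=1}^{m-1}|\lambda_i|\le\sqrt{m-1}$ on the unit ball (the paper likewise exhibits the extremal $\lambda_i=1/\sqrt{m-1}$, $u_i=u$ to show $C_1=1/(\sqrt{m-1}+1)$ exactly). No gaps.
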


\begin{proof}
Let $K$ be the second-order cone $K_{p}^{m}$ in $\mathbb{R}^{m}$. Then, $%
K^{\ast }=K=K_{p}^{m}$. Let $\mathcal{B}=\{\lambda \in \mathbb{R}^{m}:\Vert
(\lambda _{1},\ldots ,\lambda _{m-1})\Vert \leq 1\mbox{ and }\lambda
_{m}=1\} $ be the natural compact base for $K_{p}^{m}$. We claim that
\begin{equation}
C_{1}=1/\max \left\{ \Vert \sum_{i=1}^{m}\lambda _{i}u_{i}\Vert :\lambda \in
\mathcal{B},\Vert u_{i}\Vert \leq 1\right\} =\frac{1}{\sqrt{m-1}+1}.
\label{eq:00}
\end{equation}%
Indeed, by the triangle inequality, one has
\begin{eqnarray*}
& & \max \left\{ \Vert \sum_{i=1}^{m}\lambda _{i}u_{i}\Vert :\lambda \in
\mathcal{B},\Vert u_{i}\Vert \leq 1\right\} \\
&=&\max \left\{ \Vert
\sum_{i=1}^{m-1}\lambda _{i}u_{i}+u_{m}\Vert :\Vert (\lambda _{1},\ldots
,\lambda _{m-1})\Vert \leq 1,\Vert u_{i}\Vert \leq 1\right\} \\
&\leq &\max \left\{ \sum_{i=1}^{m-1}|\lambda _{i}|+1:\Vert (\lambda
_{1},\ldots ,\lambda _{m-1})\Vert \leq 1\right\} \\
&=&\sqrt{m-1}+1.
\end{eqnarray*}%
Moreover, for $u_{i}=u$ with $\Vert u\Vert =1$ and $\lambda =(%
\underbrace{\frac{1}{\sqrt{m-1}},\ldots ,\frac{1}{\sqrt{m-1}}}_{m-1},1)\in
\mathcal{B}$,
\begin{equation*}
\Vert \sum_{i=1}^{m}\lambda _{i}u_{i}\Vert =(\sqrt{m-1}+1)\Vert u\Vert =%
\sqrt{m-1}+1.
\end{equation*}%
Thus, \eqref{eq:00} holds. Direct verification also shows that $$%
C_{2}=1/\min \left\{ \displaystyle \sum_{i=1}^{m}|\lambda _{i}|:\lambda \in
\mathcal{B}\right\} =1.$$ Therefore, the conclusion follows from Theorem \ref%
{th:bound} and Theorem \ref{TheorComput} (equation \eqref{3.7}).
%
\end{proof}

%
%

{}

\subsection*{\bf Robust Separability in Uncertain SVMs}

The support vector machine for binary classification is a useful technique
in generating an optimal classifier (hyperplane) which separates the
training data into two classes. It has found numerous applications in
engineering, medical imaging and computer science. Let $(u_{i},\alpha
_{i})\in \mathbb{R}^{s}\times \{-1,1\}$ be the given training data where $%
\alpha _{i}$ is the class label for each data $u_{i}$. An optimization model
problem of support vector machine for binary classification can be stated as
follows \cite[Section 12.1.1]{BEL09}:
\begin{eqnarray*}
&\min_{(w,\gamma )\in \mathbb{R}^{s}\times \mathbb{R}}&\ \Vert w\Vert \\
&\mbox{ s.t. }&\alpha _{i}(u_{i}^{T}w+\gamma )\geq 1,i=1,\ldots ,m.
\end{eqnarray*}%
In practice, the given data $u_{i}$, $i=1,\ldots ,m$, are often uncertain.
We assume that these data are subject to the following norm data
uncertainty:
\begin{equation*}
u_{i}\in \mathcal{V}_{i}(r)=\overline{u}_{i}+r_{i}\mathbb{B}_{s}.
\end{equation*}%
%
%
%
%
%
%
%
%
%
%
%
%
%
%
Let $r=(r_{1},\ldots ,r_{m}) \in \mathbb{R}^m_+.$ Then, the robust support vector machine can
be stated as
\begin{eqnarray*}
(RSVM_{r}) &\displaystyle \min_{(w,\gamma )\in \mathbb{R}^{s}\times \mathbb{R}}&\ \Vert
w\Vert \\
&\mbox{ s.t. }&\alpha _{i}(u_{i}^{T}w+\gamma )\geq 1,\ \forall \,u_{i}\in
\mathcal{V}_{i}(r),\ i=1,\ldots ,m.
\end{eqnarray*}%
If the feasible set $F_{r}$ of $(RSVM_{r})$ is nonempty, then a linear
binary classification is possible even if the training data is subject to
measurement or prediction error and the error level is controlled by $r$.
Thus, the range of values of $r$, guaranteeing the non-emptiness of the
feasible set $F_{r}$ of $(RSVM_{r})$, quantifies the robustness of the
linear separability of the training data in uncertain support vector machine
problems. So, we now investigate the question: for what values of $r > 0$  so
that the feasible set $F_{r}$ of $(RSVM_{r})$ is nonempty.

The robust SVM problem can be further rewritten into a robust conic
programming problem as follows
\begin{eqnarray*}
(RSVM_{r}) &\displaystyle \min_{(w,\gamma ,t)\in \mathbb{R}^{s}\times \mathbb{R}\times
\mathbb{R}}&\ t \\
&\mbox{ s.t. }&\Vert w\Vert \leq t \\
&&(-\alpha _{i}u_{i})^{T}w+(-\alpha _{i})\gamma +1\leq 0,\ \forall
\,u_{i}\in \mathcal{V}_{i}(r),\ i=1,\ldots ,m.
\end{eqnarray*}%
Let $K=K_{p}^{s+1}\times \mathbb{R}_{+}^{m}$, where $K_{p}^{s+1}$ is the
second-order cone in $\mathbb{R}^{s+1}$. Let $\overline{a}%
_{i}=(-e_{i}^{T},0,0)^{T}$ and $\overline{b}_{i}=0$ for $i=1,\ldots ,s$; $%
\overline{a}_{s+1}=(0_{s}^{T},0,-1)^{T}$ and $\overline{b}_{s+1}=0$; $%
\overline{a}_{i}=(-\alpha _{i-s-1}u_{i-s-1}^{T},-\alpha _{i-s-1},0)^{T}$ and
$\overline{b}_{i}=1$ for $i=s+2,\ldots ,m+s+1$. Define
\begin{equation*}
\mathcal{U}_{i}(\mu_{i})=(\overline{a}_{i},\overline{b}_{i})+\mu_{i}\mathbb{B%
}_{s+3}, \ i=1,\ldots,m+s+1,
\end{equation*}
where $\mu_i \ge 0$. We now consider a closely related robust SOCP problem
\begin{eqnarray*}
(SOCP_{\mu}) &\min_{(w,\gamma ,t)\in \mathbb{R}^{s}\times \mathbb{R}\times
\mathbb{R}}&t \\
&\mbox{ s.t. }&\left[
\begin{array}{c}
a_{1}^{T}x+b_{1} \\
\vdots \\
a_{m+s+1}^{T}x+b_{m+s+1}%
\end{array}%
\right] \in -K,\forall \ (a_{i},b_{i})\in \mathcal{U}_{i}(\mu_{i}).
\end{eqnarray*}%
Denote the feasible set of $(SOCP_{\mu})$ as $F_{\mu}^{\prime }$ with $%
\mu=(\mu_{1},\ldots ,\mu_{m+s+1})$. Then,  for any $%
r=(r_1,\ldots,r_m) \in \mathbb{R}^{m}$ and $\overline{r}=(\underbrace{%
0,\ldots,0}_{s+1},r_1,\ldots,r_m) \in \mathbb{R}^{m+s+1}$ \vspace{-0.2cm}
\begin{equation}
F_{\overline{r}}^{\prime }\neq \emptyset \Longrightarrow F_{r}\neq \emptyset.
\label{u1}
\end{equation}
\vspace{-0.1cm}Define $\overline{A}=\left[ \overline{a}_{1}\mid ...\mid
\overline{a}_{m+s+1}\right] ^{T}$, $\overline{b}=\left( \overline{b}_{1},...,%
\overline{b}_{m+s+1}\right) ^{T}$ and let $\rho (\overline{A},\overline{b})$
be the robust feasibility of $(SOCP_{\mu})$. Then for all $r\in \big[0,\rho (%
\overline{A},\overline{b})\big)^{m}$, the feasible set $F_{r}$ of the robust
support vector machine problem $(RSVM_{r})$ will be nonempty. In particular,
we have the following result:

\begin{corollary}
Let $r=(r_1,\ldots,r_{m})$ and denote the feasible set of the robust support
vector machine problem $(RSVM_r)$ by $F_r$. Then, $F_r \neq \emptyset$ for
all $r \in \mathbb{R}^m_+$ with $r_i \le \frac{1}{\sqrt{s}+1} \, f^*,$ where
$f^*$ is the optimal value of the following second-order cone program
\begin{equation*}
\inf\limits_{(z,s,t,\lambda )\in \mathbb{R}^{n}\times \mathbb{R}\times
\mathbb{R}\times \mathbb{R}^{m+s+1}}\left\{ t\left\vert
\begin{array}{l}
\Vert (z,s)\Vert \leq t,\medskip \\
z=\overline{A}^T\lambda,\ s\geq -\overline{b}^{T}\lambda, \\
\|(\lambda_1,\ldots,\lambda_s)\| \le \lambda_{s+1}, \\
\sum_{i=1}^{m+1}\lambda_{s+i}=1, \lambda_{s+1}\ge 0 ,\ldots,\lambda_{m+s+1}
\ge 0.%
\end{array}%
\right. \right\} .
\end{equation*}
\end{corollary}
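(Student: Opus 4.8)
The plan is to read off a lower bound for the radius of robust feasibility $\rho(\overline{A},\overline{b})$ of the reformulated problem $(SOCP_{\mu})$ from Theorem~\ref{th:bound} and Theorem~\ref{TheorComput}, and then to transfer this bound to the robust SVM problem by means of the implication \eqref{u1} together with the remark made just before the statement (namely that $F_{r}\neq\emptyset$ whenever $\overline{r}=(0,\ldots,0,r_{1},\ldots,r_{m})$ lies in the admissible set of $(SOCP_{\mu})$). First I would record that the cone appearing in $(SOCP_{\mu})$ is $K=K_{p}^{s+1}\times\mathbb{R}_{+}^{m}$, which is self-dual, so $K^{\ast}=K$, and I would fix as compact base of $K^{\ast}$ the set
\[
\mathcal{B}=\Big\{\lambda\in\mathbb{R}^{m+s+1}:\ \|(\lambda_{1},\ldots,\lambda_{s})\|\le\lambda_{s+1},\ \ \lambda_{s+1}+\textstyle\sum_{i=s+2}^{m+s+1}\lambda_{i}=1,\ \ \lambda_{s+1},\ldots,\lambda_{m+s+1}\ge0\Big\}.
\]
That $\mathcal{B}$ is a genuine compact base is checked by observing that $\lambda\mapsto\lambda_{s+1}+\sum_{i=s+2}^{m+s+1}\lambda_{i}$ is linear and strictly positive on $K^{\ast}\setminus\{0_{m+s+1}\}$.

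With this base in hand, Theorem~\ref{th:bound} gives $\rho(\overline{A},\overline{b})\ge C_{1}\,\mathop{\rm dist}(0_{n+1},E(\overline{A},\overline{b},\mathcal{B}))$ (here $n=s+2$, the dimension of the variable $(w,\gamma,t)$), and Theorem~\ref{TheorComput}, equation \eqref{3.7}, shows that after substituting the above description of $\mathcal{B}$ the quantity $\mathop{\rm dist}(0_{n+1},E(\overline{A},\overline{b},\mathcal{B}))$ coincides exactly with the optimal value $f^{\ast}$ of the second-order cone program displayed in the corollary. So the only computation left is to show that the constant $C_{1}=C_{1}(\mathcal{B})$ of Theorem~\ref{th:bound} equals $\tfrac{1}{\sqrt{s}+1}$.

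For that, I would first use the triangle inequality to get $\|\sum_{i=1}^{m+s+1}\lambda_{i}u_{i}\|\le\sum_{i=1}^{m+s+1}|\lambda_{i}|$, with equality attained by taking all the $u_{i}$ equal to a common unit vector times $\mathrm{sign}\,\lambda_{i}$, so that $\max\{\|\sum_{i}\lambda_{i}u_{i}\|:\lambda\in\mathcal{B},\|u_{i}\|\le1\}=\max\{\sum_{i=1}^{m+s+1}|\lambda_{i}|:\lambda\in\mathcal{B}\}$. For $\lambda\in\mathcal{B}$ the coordinates $\lambda_{s+1},\ldots,\lambda_{m+s+1}$ are nonnegative with $\lambda_{s+1}+\sum_{i=s+2}^{m+s+1}\lambda_{i}=1$, while $\sum_{i=1}^{s}|\lambda_{i}|\le\sqrt{s}\,\|(\lambda_{1},\ldots,\lambda_{s})\|\le\sqrt{s}\,\lambda_{s+1}$; hence $\sum_{i=1}^{m+s+1}|\lambda_{i}|\le\sqrt{s}\,\lambda_{s+1}+1\le\sqrt{s}+1$, the bound being attained at $\lambda_{s+1}=1$, $(\lambda_{1},\ldots,\lambda_{s})=\tfrac{1}{\sqrt{s}}\,1_{s}$, $\lambda_{s+2}=\cdots=\lambda_{m+s+1}=0$. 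Thus $C_{1}=\tfrac{1}{\sqrt{s}+1}$, so $\rho(\overline{A},\overline{b})\ge\tfrac{1}{\sqrt{s}+1}f^{\ast}$, and combining this with \eqref{u1} and Proposition~\ref{Prop_Ro}(ii) applied to $(SOCP_{\mu})$ (using the monotonicity \eqref{eq:useful0} of the admissible set to handle the point $\overline{r}$, whose first $s+1$ entries vanish) yields $F_{r}\neq\emptyset$ for all $r$ with $r_{i}\le\tfrac{1}{\sqrt{s}+1}f^{\ast}$.

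I expect the evaluation of $C_{1}$ to be the only non-mechanical step; everything else is substitution into Theorems~\ref{th:bound} and~\ref{TheorComput} and the previously established reduction \eqref{u1}. The small point that needs care is the passage from the open range $[0,\rho(\overline{A},\overline{b}))^{m}$ in the pre-corollary remark to the closed bound $r_{i}\le\tfrac{1}{\sqrt{s}+1}f^{\ast}$, which is why the monotonicity property \eqref{eq:useful0} and the inequality $\tfrac{1}{\sqrt{s}+1}f^{\ast}\le\rho(\overline{A},\overline{b})$ are invoked together in the final step.
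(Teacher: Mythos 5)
Your proposal is correct and follows essentially the same route as the paper: the same compact base $\mathcal{B}$ for $K^{\ast}=K_{p}^{s+1}\times\mathbb{R}_{+}^{m}$, the same combination of Theorem \ref{th:bound} with Theorem \ref{TheorComput} to get $\rho(\overline{A},\overline{b})\geq C_{1}f^{\ast}$, the same evaluation $C_{1}=\frac{1}{\sqrt{s}+1}$ (which the paper merely delegates to ``the same reasoning as in Corollary \ref{Corol_SOCP}'' and you carry out explicitly), and the same transfer to $(RSVM_{r})$ via \eqref{u1}. The only differences are cosmetic elaborations (verifying that $\mathcal{B}$ is a compact base, and your explicit remark on the boundary case $r_{i}=\rho(\overline{A},\overline{b})$), which do not constitute a different proof.
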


\begin{proof}
Note that $K^{\ast }=K=K_{p}^{s+1}\times \mathbb{R}_{+}^{m}$. So, a compact
base $\mathcal{B}$ for $K^{\ast }$ is
\begin{equation*}
\{\lambda :\Vert (\lambda _{1},\ldots ,\lambda _{s})\Vert \leq \lambda
_{s+1},\sum_{i=1}^{m+1}\lambda _{s+i}=1,\lambda _{s+1}\geq 0,\ldots ,\lambda
_{m+s+1}\geq 0\}.
\end{equation*}%
Let $\rho (\overline{A},\overline{b})$ be the RRF of $(SOCP_{\mu })$. From
Theorem \ref{th:bound}, one sees that
\begin{equation*}
\rho (\overline{A},\overline{b})\geq C_{1}\,\mathop{\rm dist}\left(
0_{n+1},E(\overline{A},\overline{b},\mathcal{B})\right) =C_{1}f^{\ast },
\end{equation*}
where the equality follows from Theorem \ref{TheorComput}. So, for all $%
r_{i} $ with $0\leq r_{i}\leq C_{1}\,f^{\ast }$, $i=1,\ldots ,m$, $F_{%
\overline{r}}^{\prime }\neq \emptyset $ where $\overline{r}=(0,\ldots
,0,r_{1},\ldots ,r_{m})\in \mathbb{R}^{m+s+1}$. This together with \eqref{u1}
implies that $F_{r}$ is nonempty if $0\leq r_{i}\leq C_{1}\,f^{\ast }$, $%
i=1,\ldots ,m$. Now the conclusion follows by noting that
\begin{equation*}
C_{1}=1/\max \left\{ \Vert \sum_{i=1}^{m+s+1}\lambda _{i}u_{i}\Vert :\lambda
\in \mathcal{B},\Vert u_{i}\Vert \leq 1\right\} =\frac{1}{\sqrt{s}+1},
\end{equation*}%
where the last equality holds by using the same reasoning as in the proof of
Corollary \ref{Corol_SOCP}.
\end{proof}

\section{Conclusions}
In this paper, we introduced the notion of radius of robust feasibility for an
uncertain linear conic program, which provides a numerical value for the
largest size of a ball uncertainty set that guarantees non-emptiness of the robust feasible set.
We then provided formulas for estimating the radius of robust feasibility of uncertain conic programs, using the tools of convex analysis and parametric optimization. We also established
computationally tractable bounds for two important uncertain conic programs:
semi-definite programs and second-order cone programs.
In the special case of uncertain linear programs, the formula allows
us to calculate the radius by finding the optimal value of an associated
second-order cone program.

Our results suggest some interesting further work. For example,
the radius of robust feasibility formula was achieved for commonly used ball
uncertainty set. It would be of interest to examine how our approach can be extended
to cover other commonly used uncertainty sets such as the polytope uncertainty
sets or intersection of polytope and norm uncertainty sets. Another interesting topic of study would be to extend our approach to calculate the
radius of robust feasibility for uncertain discrete nonlinear optimization
problems (see a promising computational approach initialized in \cite{LST20} for the linear cases).

\bigskip

\appendix
\noindent Appendix: Proofs of Basic Properties of Admissible Sets

\noindent In this appendix, we provide the proof of the basic properties of the
admissible set of parameters (Proposition \ref{Prop_Consist_Set3}).

\newpage
\medskip \noindent \textbf{Proof of Proposition \ref{Prop_Consist_Set3}:}

\begin{proof}
\ [Proof of \textrm{(a)}] Direct verification gives us that
\begin{eqnarray*}
F_{r}(\overline{A},\overline{b})=\{ x\in \mathbb{R}^{n} &:& \displaystyle%
\sum\limits_{i=1}^{m}\lambda _{i}(\overline{a}_{i}+\Delta a_{i})^{T}x+%
\displaystyle\sum\limits_{i=1}^{m}\lambda _{i}(\overline{b}_{i}+\Delta
b_{i})\leq 0, \\
& &  \forall \lambda \in K^{\ast },\left\Vert (\Delta
a_{i},\Delta b_{i})\right\Vert \leq r_{i},\ i=1,\ldots,m\} .  \label{2.2}
\end{eqnarray*}%
It now follows from the well-known existence theorem for linear systems \cite%
[Corollary 3.1.1]{GL98} that $F_{r}(\overline{A},\overline{b})\neq \emptyset
$ if and only if%
\begin{equation*}
\left( 0_{n},-1\right) \notin \overline{\mathop{\rm cone}\left\{ %
\displaystyle\sum\limits_{i=1}^{m}\lambda _{i}\left( \overline{a}_{i}+\Delta
a_{i},-\overline{b}_{i}-\Delta b_{i}\right) :\lambda \in \mathcal{B},\Vert
(\Delta a_{i},\Delta b_{i})\Vert \leq r_{i},\forall i\right\} },
\end{equation*}%
which, in turn, is equivalent to the statement
\begin{equation*}
\begin{array}{ll}
\left( 0_{n},1\right) & \notin \overline{\mathop{\rm cone}\left\{ %
\displaystyle\sum\limits_{i=1}^{m}\lambda _{i}\left( \overline{a}_{i}+\Delta
a_{i},\overline{b}_{i}+\Delta b_{i}\right) :\lambda \in \mathcal{B},\Vert
(\Delta a_{i},\Delta b_{i})\Vert \leq r_{i},\forall i\right\} }\medskip \\
& =\overline{\mathop{\rm cone}\left\{ \displaystyle\sum\limits_{i=1}^{m}%
\lambda _{i}\left( (\overline{a}_{i},\overline{b}_{i})+r_{i}\mathbb{B}%
_{n+1}\right) :\lambda \in \mathcal{B}\right\} }\medskip .%
\end{array}%
\end{equation*}%
Thus, the conclusion follows.

\ [Proof of \textrm{(b)}] Let $\mathcal{B}$ be the compact base of $K^{\ast
} $. Then, $0_{m}\notin \mathcal{B}$, and so,
\begin{equation*}
\mu :=\min \{\min_{1\leq i\leq m}\big\vert {\lambda }_{i}\big\vert :\lambda
\in \mathcal{B}\}>0.
\end{equation*}%
Define $M=\max \{\Vert \displaystyle\sum\limits_{i=1}^{m}{\lambda }_{i}(%
\overline{a}_{i},\overline{b}_{i})\Vert :\lambda \in \mathcal{B}\}<+\infty .$
We shall prove by contradiction that $C(\overline{A},\overline{b})\subseteq
\frac{M}{\mu }\mathbb{B}_{m}.$ If, in the contrary, $C(\overline{A},%
\overline{b})\nsubseteqq \frac{M}{\mu }\mathbb{B}_{m}$, then we can take $%
r\in C(\overline{A},\overline{b})$ such that $\left\Vert r\right\Vert \geq
\frac{M+\epsilon }{\mu }$ for some $\epsilon >0.$ Now, fix any $\widetilde{%
\lambda }\in \mathcal{B}$. Note that
\begin{equation*}
\displaystyle\sum\limits_{i=1}^{m}\left\vert \widetilde{\lambda }%
_{i}\right\vert r_{i}\geq \mu \displaystyle\sum\limits_{i=1}^{m}r_{i}\geq
\mu \sqrt{\displaystyle\sum\limits_{i=1}^{m}r_{i}^{2}}\geq M+\epsilon \geq
\Vert \displaystyle\sum\limits_{i=1}^{m}\widetilde{\lambda }_{i}(\overline{a}%
_{i},\overline{b}_{i})\Vert +\epsilon .
\end{equation*}%
It follows that
\begin{eqnarray*}
\epsilon \mathbb{B}_{n+1} \subseteq \displaystyle\sum\limits_{i=1}^{m}%
\widetilde{\lambda }_{i}(\overline{a}_{i},\overline{b}_{i})+\displaystyle%
\sum\limits_{i=1}^{m}\left\vert \widetilde{\lambda }_{i}\right\vert r_{i}%
\mathbb{B}_{n+1} & = & \displaystyle\sum\limits_{i=1}^{m}\widetilde{\lambda }%
_{i}(\left( \overline{a}_{i},\overline{b}_{i})+r_{i}\mathbb{B}_{n+1}\right) \\
&\subseteq & \displaystyle\bigcup\limits_{\lambda \in \mathcal{B}}\left\{ %
\displaystyle\sum\limits_{i=1}^{m}\lambda _{i}(\left( \overline{a}_{i},%
\overline{b}_{i})+r_{i}\mathbb{B}_{n+1}\right) \right\} ,
\end{eqnarray*}%
and so
\begin{equation*}
\left( 0_{n},1\right) \in \mathop{\rm cone}\left\{ \displaystyle%
\sum\limits_{i=1}^{m}\lambda _{i}\left( (\overline{a}_{i},\overline{b}%
_{i})+r_{i}\mathbb{B}_{n+1}\right) :\lambda \in \mathcal{B}\right\} .
\end{equation*}%
Then, by Proposition \ref{Prop_Consist_Set3} part (a) , $r\notin C(\overline{%
A},\overline{b})$ (contradiction). Hence, $C(\overline{A},\overline{b})$ is
bounded. Thus, the conclusion follows by the fact that $C(\overline{A},%
\overline{b})$ is radiant. %
%
%

[Proof of \textrm{(c)}] Let us show the ``$\left[ \Longrightarrow \right]$''
direction first. Assume that $\sigma _{r}^{\mathcal{B}}$ satisfies the
Slater condition. As $r\in \mathbb{R}^m_{++}$, there exists $\xi>0$ such
that $r+\xi \mathbb{B}_{m}\subseteq \mathbb{R}_{++}^{m}$. Since $\mathcal{B}$
is a compact base (and so, $0_m \notin \mathcal{B}$), $\eta
:=\max\limits_{\lambda \in \mathcal{B}}\max\limits_{1\leq i\leq m}\left\vert
\lambda _{i}\right\vert $ is a positive real number. Consider the mapping
\begin{equation*}
\begin{array}{ccccc}
&  & (m) &  &  \\
\Phi : & \mathbb{R}^{m}\times & \overbrace{\mathbb{R}^{n+1}\times ...\times
\mathbb{R}^{n+1}} & \longrightarrow & \mathbb{R}^{n+1} \\
& \lambda & \left( z^{1},...,z^{m}\right) &  & \displaystyle %
\sum\limits_{i=1}^{m}\lambda _{i}z^{i}%
\end{array}%
\end{equation*}%
Since $\Phi $\ is continuous (as it has quadratic components) and the set $%
D_{r}:=\mathcal{B\times }\prod\limits_{i=1}^{m}\left[ \left( \overline{a}%
_{i},\overline{b}_{i}\right) +r_{i}\mathbb{B}_{n+1}\right] $ is compact, $%
C_{r}:=\Phi \left( D_{r}\right) $ is a compact subset of $\mathbb{R}^{n+1}$
too. By the Slater condition, there exists $x^{0}\in \mathbb{R}^{n}$ such
that $\left\langle \left( y,y_{n+1}\right) ,\left( x^{0},1\right)
\right\rangle <0$ for all $\left( y,y_{n+1}\right) \in C_{r}.$ Let $\epsilon
>0$ be such that%
\begin{equation*}
\begin{array}{ll}
-\epsilon & =\max \left\{ \left\langle \left( y,y_{n+1}\right) ,\left(
x^{0},1\right) \right\rangle :\left( y,y_{n+1}\right) \in C_{r}\right\} \\
& =\max \left\{ \left\langle \Phi \left( d\right) ,\left( x^{0},1\right)
\right\rangle :d\in D_{r}\right\} .%
\end{array}%
\end{equation*}%
Given $t\in \mathbb{R}_{++}^{m},$ we consider an element of $D_{t}$ of the
form
\begin{equation*}
d^{t}=\left( \lambda ^{t},\left( \overline{a}_{1},\overline{b}_{1}\right)
+t_{1}u^{1,t},...,\left( \overline{a}_{m},\overline{b}_{m}\right)
+t_{m}u^{m,t}\right) ,
\end{equation*}%
with $\lambda ^{t}\in \mathcal{B}$ and $u^{i,t}\in \mathbb{B}_{n+1},$ $%
i=1,...,m,$ arbitrarily chosen. We define the vector $d^{r}:=\left( \lambda
^{t},\left( \overline{a}_{1},\overline{b}_{1}\right)
+r_{1}u^{1,t},...,\left( \overline{a}_{m},\overline{b}_{m}\right)
+r_{m}u^{m,t}\right) \in D_{r}.$ Since%
\begin{equation*}
\left\Vert \Phi \left( d^{t}\right) -\Phi \left( d^{r}\right) \right\Vert
=\left\Vert \displaystyle \sum\limits_{i=1}^{m}\lambda _{i}\left(
t_{i}-r_{i}\right) u^{i,t}\right\Vert \leq m\eta \left\Vert t-r\right\Vert ,
\end{equation*}%
\begin{equation*}
\left\vert \left\langle \Phi \left( d^{t}\right) -\Phi \left( d^{r}\right)
,\left( x^{0},1\right) \right\rangle \right\vert \leq m\eta \left\Vert
\left( x^{0},1\right) \right\Vert \left\Vert t-r\right\Vert ,
\end{equation*}%
and so $\left\langle \Phi \left( d^{t}\right) ,\left( x^{0},1\right)
\right\rangle \leq m\eta \left\Vert \left( x^{0},1\right) \right\Vert
\left\Vert t-r\right\Vert -\epsilon$. Hence, if $$\left\Vert t-r\right\Vert
\leq \min \left\{ \frac{\epsilon }{2m\eta \left\Vert \left( x^{0},1\right)
\right\Vert },\xi \right\} ,$$
then, one has
\begin{equation*}
\max \left\{ \left\langle \left( y,y_{n+1}\right) ,\left( x^{0},1\right)
\right\rangle :\left( y,y_{n+1}\right) \in C_{t}\right\} \leq -\frac{%
\epsilon }{2}<0,
\end{equation*}%
which shows that $x^{0}$ is a Slater point for $\sigma _{t}^{\mathcal{B}}.$
This implies that $t\in C(\overline{A},\overline{b}).$ Thus, one has $r\in %
\mathop{\rm int}C(\overline{A},\overline{b}).$\newline

We now show the reverse direction \textquotedblleft $\left[ \Longleftarrow %
\right] $\textquotedblright . Let $r\in \mathop{\rm int}C(\overline{A},%
\overline{b})$ and for all $\lambda \in \mathcal{B}$, $\displaystyle\sum\limits_{i=1}^{m}r_{i}\lambda _{i}>0.$
 Then there exists $\epsilon >0$ such that
$r+\epsilon 1_{m}\in C(\overline{A},\overline{b})$ and $$\gamma :=\min
\left\{ \displaystyle\sum\limits_{i=1}^{m}r_{i}\lambda _{i}:\lambda \in
\mathcal{B}\right\} >0.$$
We consider perturbations of the coefficients of $\sigma _{r}^{\mathcal{B}}$
preserving the index set $\mathcal{B\times }\displaystyle\prod%
\limits_{i=1}^{m}\left( r_{i}\mathbb{B}_{n+1}\right) $ and we measure such
perturbations by means of the Chebyshev metric $d_{\infty }$. The
coefficients vector of $\sigma _{r}^{\mathcal{B}}$ can be expressed as
\begin{equation}
\displaystyle\sum\limits_{i=1}^{m}\lambda _{i}\left[ \left( \overline{a}_{i},%
\overline{b}_{i}\right) +r_{i}u^{i}\right] =\displaystyle\sum%
\limits_{i=1}^{m}\lambda _{i}\left( \overline{a}_{i},\overline{b}_{i}\right)
+\displaystyle\sum\limits_{i=1}^{m}\lambda _{i}r_{i}u^{i},  \label{2.5}
\end{equation}%
where $\lambda \in \mathcal{B}$ and $u^{i}\in \mathbb{B}_{n+1},$ $i=1,...,m.$
Consider an additive perturbation $\delta w,$ with $0<\delta <\gamma
\epsilon $ and $w\in \mathbb{B}_{n+1}$\ of the coefficient vector in (\ref%
{2.5}). Let $v:=\frac{w}{\displaystyle\sum\nolimits_{1\leq i\leq
m}r_{i}\lambda _{i}}.$ Since $\left\Vert u^{i}+\delta v\right\Vert \leq
1+\delta \left\Vert v\right\Vert \leq 1+\frac{\delta }{\gamma }$ for all $i,$
one has%
\begin{equation*}
\begin{array}{ll}
\displaystyle\sum\limits_{i=1}^{m}\lambda _{i}\left[ \left( \overline{a}_{i},%
\overline{b}_{i}\right) +r_{i}u^{i}\right] +\delta w & =\displaystyle%
\sum\limits_{i=1}^{m}\lambda _{i}\left( \overline{a}_{i},\overline{b}%
_{i}\right) +\displaystyle\sum\limits_{i=1}^{m}\lambda _{i}r_{i}\left(
u^{i}+\delta v\right) \\
& \in \displaystyle\sum\limits_{i=1}^{m}\lambda _{i}\left( \overline{a}_{i},%
\overline{b}_{i}\right) +\displaystyle\sum\limits_{i=1}^{m}\lambda
_{i}r_{i}\left( 1+\frac{\delta }{\gamma }\right) \mathbb{B}_{n+1} \\
& \subseteq \displaystyle\sum\limits_{i=1}^{m}\lambda _{i}\left( \overline{a}%
_{i},\overline{b}_{i}\right) +\displaystyle\sum\limits_{i=1}^{m}\lambda
_{i}r_{i}\left( 1+\epsilon \right) \mathbb{B}_{n+1}.%
\end{array}%
\end{equation*}%
So, the solution set of any perturbed system obtained from $\sigma _{r}^{%
\mathcal{B}}$ by summing up vectors of Euclidean norm less than $\gamma
\epsilon $ to each the coefficient vector contains $F_{r}(\overline{A},%
\overline{b})\neq \emptyset .$ In particular, summing up vectors of
Chebyshev norm less than $\frac{\gamma \epsilon }{\sqrt{m}}$ to each
coefficient vector of $\sigma _{r}^{\mathcal{B}}$ we get a feasible
perturbed system. Hence, by \cite[Theorem 6.1]{GL98}, $\sigma _{r}^{\mathcal{%
B}}$ has a strong Slater solution, which shows that $\sigma _{r}^{\mathcal{B}%
}$ satisfies the Slater condition.
\end{proof}
\begin{acknowledgements}
This research was partially supported by the Australian Research Council,
Discovery Project DP120100467 and the Ministry of Science, Innovation and
Universities of Spain and the European Regional Development Fund (ERDF) of
the European Commission, Grant PGC2018-097960-B-C22.
\end{acknowledgements}
%


\end{singlespace}

\end{document}